	\definecolor{ocre}{RGB}{0,173,239}
	\definecolor{mygreen}{RGB}{28,172,0} 
	\definecolor{mylilas}{RGB}{170,55,241}
	\definecolor{mygray}{RGB}{255,255,255}
	\definecolor{myblue}{RGB}{0, 143 , 218}
	\patchcmd{\@setref}{\bfseries ??}{{\color{red}[\texttt{\detokenize{ #3 }}]}}{}{%
	  \GenericWarning{}{Failed to patch \protect\@setref}}
	\patchcmd{\@citex}{\bfseries ?}{{\color{red}[\texttt{\detokenize{ #3 }}]}}{}{%
	  \GenericWarning{}{Failed to patch \protect\@citex}}
\def \IMG {images}
	\tikzstyle{noeud-std}=[draw,fill=black,circle,inner sep=0pt,minimum size=7pt]
\tikzset{%
  highlight/.style={rectangle,rounded corners,fill=ocre!50,draw,
    fill opacity=0.5,thick,inner sep=0pt}
}
\newtheorem{theorem}{Theorem}[section]
\newtheorem{proposition}[theorem]{Proposition}
\newtheorem{lemma}[theorem]{Lemma}
\newtheorem{corollary}[theorem]{Corollary}
\newtheorem{definition}[theorem]{Definition}
\newtheorem{remark}[theorem]{Remark}
\newtheorem{example}[theorem]{Example}
\DeclareMathAlphabet\mathbfcal{OMS}{cmsy}{b}{n}
\newcommand{\mset}{\mathcal{A}}
\newcommand{\vset}{\mathcal{V}}
\newcommand{\Smat}{\mathbf{S}}
\newcommand{\Dmat}{\mathbf{D}}
\newcommand{\U}{\mathbf{U}}
\newcommand{\pclf}{\text{pclf}}
\newcommand{\reels}{\mathbb{R}}
\newcommand{\graph}{\mathbfcal{G}}
\newcommand{\naturals}{\mathbb{N}}
\newcommand{\alphabet}[1]{\langle#1\rangle}
\newif\ifdraft
\begin{document}
\title{On Path-Complete Lyapunov Functions: Geometry and Comparison}
%
%
%

\author{Matthew Philippe, Nikolaos Athanasopoulos, David Angeli and Rapha\"{e}l M. Jungers%
\thanks{D. Angeli is affiliated both to the Dept. of Electrical and Electronic Engineering at the Imperial College London, UK, and to the Dept. of Information Engineering, University of Florence, Italy. E-mail: d.angeli@imperial.ac.uk.}%
\thanks{R.M. Jungers and M. Philippe are with the ICTEAM institute of the  Universit\'{e} catholique de Louvain, Belgium. They are supported by  the French Community
of Belgium (ARC grant 13/18-054) and by the SESAR 2020 project COPTRA. M.P. is a F.N.R.S.-FRIA Fellow. E-mails:
\{raphael.jungers,  matthew.philippe\}@uclouvain.be}%
\thanks{N. Athanasopoulos is with the School of Electronics, Electrical Engineering and Computer Science, Queen's University Belfast, Belfast, Northern Ireland. E-mail: n.athanasopoulos@qub.ac.uk}
}

\markboth{Journal of \LaTeX\ Class Files,~Vol.~, No.~, August~2017}%
{Shell \MakeLowercase{\textit{et al.}}: Bare Demo of IEEEtran.cls for IEEE Journals}
%

\maketitle

\begin{abstract}
We study optimization-based criteria for the stability of switching systems, known as \emph{Path-Complete Lyapunov Functions,}  and ask the question ``can we decide algorithmically when a criterion is less conservative than another''.
Our contribution is twofold. First, we show that a Path-Complete Lyapunov Function, which is a \emph{multiple} Lyapunov function by nature,
 can always be expressed as a  \emph{common} Lyapunov function taking the form of a combination of minima and maxima of the elementary functions that compose it. 
Geometrically, our results provide for each Path-Complete criterion an implied invariant set.
 Second, we provide a linear programming criterion allowing to compare the conservativeness of two arbitrary given Path-Complete Lyapunov functions.
\end{abstract}
\begin{IEEEkeywords}
Path-Complete Methods, Lyapunov stability theory, Conservativeness, Automata, Switching Systems.
\end{IEEEkeywords}
\IEEEpeerreviewmaketitle

\section{Introduction}
\IEEEPARstart{S}{witching} systems
\cite{LiSISA,ShWiSCFS,JuTJSR,LiAnSASO} present major theoretical challenges 
\cite{BlTsTBOA}. They provide an accurate modeling framework for many  processes
\cite{MaAlCOHC,HeMiOAMS,ShWiAPSM, DoHeSAON,HeDaEBTL} and 
can be used as abstractions for more complex hybrid dynamical systems \cite{GiPaABAB}. 
We focus on discrete-time linear switching systems, with the following dynamics:
\begin{equation}
x(t+1) = A_{\sigma(t)} x(t).
\label{intro:eq:swsys}
\end{equation}
There, at any time $t$, $\sigma(t) \in \{1, \ldots, M\}$ is the \emph{mode} of the system and each mode corresponds to a matrix from a set of $M$ matrices $\mset = \{A_1, \ldots, A_M\}$. 
We call a sequence of modes  $\sigma(0)\sigma(1)\ldots$ a \emph{switching sequence}.

The question of the stability of a switching system has been a major challenge in the Control Engineering literature in the past decades \cite{LiMoBPIS, LiAnSASO, ShWiSCFS}. We are interested in the study of certificates for the stability under \emph{arbitrary switching}.
\begin{definition}
The System \eqref{intro:eq:swsys} is \emph{stable under arbitrary switching} if there is $K \in \reels$ such that
for \emph{any} switching sequence $\sigma(0)\sigma(1)\ldots$, where $\sigma(t) \in \{1, \ldots, M\}$, the
trajectories satisfy
$$ \forall x(0) \in \reels^n, \, \forall t \in \naturals: \|x(t)\| \leq K \|x(0)\|.$$
The System is asymptotically stable if it is stable and furthermore, for any switching sequence and initial condition, $\lim_{t \rightarrow \infty}\|x(t)\| = 0$.
\end{definition} 
The problem of deciding whether or not a switching system is stable under arbitrary switching is difficult, and in general undecidable (see e.g. \cite{JuTJSR,BlTsTBOA}).
Nevertheless, several tools have been developed, which provide \emph{semi-algorithms} to decide \emph{asymptotic stability} \cite{PhEsSODT,BlFeSAOD,LeDuUSOD,EsLeCOLS,MoPyCOAS, BlMiSTMI,AnLaASCF, PaJaAOTJ}. 

A popular approach to assess stability for switching systems is to look for a \emph{common Lyapunov function (CLF)}.
The method is attractive because stable arbitrary switching systems always have a CLF (see e.g. \cite[Theorem 2.2]{JuTJSR}). However,  in general, whatever the technique used to search for such a function, if it is tractable, it can only provide conservative stability certificates. The search for a \emph{common quadratic Lyapunov function} (see e.g. \cite[Section II-A]{LiAnSASO}) illustrates this fact well.
There, the goal is to find a \emph{positive definite quadratic function} $V : \reels^n \rightarrow \reels_{\geq 0} : x \mapsto x^\top Q x,$ for a positive definite matrix $Q \succ 0$, such that
\begin{equation}
 \forall \sigma \in \{1, \ldots, M\}, \forall x \in \reels^n: V(A_\sigma x) \leq V(x). 
 \label{eq:lyapIneqCLF}
 \end{equation}
 Checking for the existence of such a function can be done efficiently using convex optimization tools because the \emph{Lyapunov inequalities} \eqref{eq:lyapIneqCLF} are equivalent to a set of \emph{linear matrix inequalities}. Nevertheless, such a Lyapunov function may not exist, even for asymptotically stable systems, see e.g.
\cite{LiMoBPIS, LiAnSASO}, and Example \ref{ex:ExampleCLF1} below. In order to alleviate this conservativeness, one may rely on more complex parameterizations for the Lyapunov function $V$ at the cost of greater computational efforts (e.g.\cite{PaJaAOTJ} uses sum-of-squares polynomials,
\cite{GoHuDMII} uses max-of-quadratics Lyapunov functions,  and reachability analysis \cite{AnLaASCF,BlMiSTMI}).

 \emph{Multiple Lyapunov functions} (see \cite{BrMLFA,ShWiSCFS,JoRaCOPQ}) arise as an alternative to the search of
common Lyapunov functions. Here again, multiple \emph{quadratic} Lyapunov  functions such as those introduced in \cite{BlFeSAOD,DaRiSAAC,
LeDuUSOD, EsLeCOLS} hold special interest because checking for their existence also amounts  to
 solve a set of linear matrix inequalities. For example, let us consider a switching System \eqref{intro:eq:swsys} on $M = 2$ modes.
The multiple Lyapunov function proposed in \cite{DaBePDLF} is composed of two positive definite quadratic functions $V_a, V_b : \reels^{n} \rightarrow \reels_{\geq 0}$
 that satisfy the following sets of inequalities $\forall x \in \reels^n$:
\begin{equation}
\begin{aligned}
V_{a}(A_1x)& \leq V_{a}(x), \\
 V_{b}(A_1x)& \leq V_{a}(x), \\
 V_{a}(A_2x)& \leq V_{b}(x), \\
 V_{b}(A_2x)& \leq V_{b}(x).
\end{aligned}
\label{eq:4LyapunIneq}
\end{equation}

These tools  make use of convex optimization and linear matrix inequalities in order to provide powerful algorithms for the stability analysis of switching Systems. 
In order to further analyze such tools, Ahmadi et al. recently introduced the concept of \emph{Path-Complete Lyapunov functions \cite{AhJuJSRA}}.
There, multiple Lyapunov functions such as the one of \cite{DaBePDLF} mentioned above are represented by \emph{directed and labeled graphs}, see Figure  \ref{figure:FirstPCGraphs}.\\
 More precisely, let $\graph = (S,E)$ be a graph where $S$ is the set of nodes, and $E \subset S \times S \times \{1, \ldots, M\}$ is the set of directed edges labeled by one of the $M$ modes\footnote{In a more general setting, the labels on the edges can be taken as finite sequences of modes. Our results extend there through the so-called \emph{expanded graph} \cite[Definition 2.1]{AhJuJSRA}.} of the System \eqref{intro:eq:swsys}. To each node $s \in S$ of the graph, we assign one positive definite quadratic function  $V_s : \reels^n \rightarrow \reels_{\geq 0}$. An edge $(s,d,\sigma) \in E$ then encodes the following inequality:
 \begin{equation}
\forall x \in \reels^n: V_d(A_\sigma x ) \leq V_s(x).
\label{eq:lyapIneqEdge}
\end{equation}
This formalism provides a framework under which to unify, generalize and study multiple Lyapunov functions such as cited above. Fundamental properties of a multiple Lyapunov functions represented by a graph $\graph$ can be deduced from the properties of that graph. In particular, in \cite{AhJuJSRA, JuAhACOL} the authors provide a characterization of the set of graphs that \emph{represent stability certificates for switching systems on $M$ modes}. This property, known as \emph{Path-Completeness}, leads to the concept of \emph{Path-Complete Lyapunov functions} (see Definition \ref{def:PCLF} below).

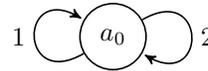
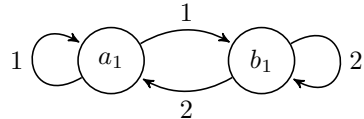
\begin{figure}[!ht]
\centering
\begin{subfigure}[c]{0.4\textwidth}
\centering
\begin{tikzpicture}[->,>=stealth',shorten >=1pt,auto,node distance=2.8cm,
                   semithick, scale = 1, transform shape ]
 \node[state] (V1)                           {$a_0$};

 \path (V1)edge [loop right, out = 30, in = -30, min distance = 10mm] node {$2$}             (V1)
     	   edge [loop left, out = 210, in = 150, min distance = 10mm]  node {$1$}             (V1)
     	   ;
\end{tikzpicture}
\caption{Graph $\graph_0$, corresponding to the inequalities \eqref{eq:lyapIneqCLF}, for a common Lyapunov function. }
\label{fig:PCExample_0}
\end{subfigure}
\begin{subfigure}[c]{0.4\textwidth}
\centering
\begin{tikzpicture}[->,>=stealth',shorten >=1pt,auto,node distance=2cm,
                    semithick, scale = 1, transform shape ]
 \node[state] (V1)                           {$a_1$};
 \node[state] (V2)     [right of = V1]       {$b_1$};
  \path (V1)edge [loop,  out = 210, in = 150, min distance = 10mm]            node {$1$} (V1)
      	    edge [bend left] node {$1$}            (V2)
     	(V2)edge [bend left]             node {$2$} (V1)
      	    edge [loop,  out = 30, in = -30, min distance = 10mm] node {$2$} (V2);
\end{tikzpicture}
\caption{Graph $\graph_1$, corresponding to the inequalities \eqref{eq:4LyapunIneq}, leading to the multiple Lyapunov function of \cite{DaBePDLF} for a system on two modes. }
\label{fig:PCExample_1}
\end{subfigure}
\caption{ Examples of labeled and directed graphs.  }
\label{figure:FirstPCGraphs}
\end{figure}

Several challenges exist in the study of Path-Complete Lyapunov functions, in particular for matters related to how these certificates compare with each other with respect to their conservativeness\footnote{Preliminary versions of our results have been presented in \cite{AnPhPCGC,AnAtALPT}.}.
In this paper we first ask a natural question which aims at revealing the connection to classic Lyapunov theory:\\
\textbf{Q1: }\emph{Can any Path-Complete Lyapunov function be represented as a common Lyapunov function}?\\  We answer this question affirmatively in Theorem \ref{thm:EmbeddedCLF}. We show that if a System \eqref{intro:eq:swsys} has a PCLF for a graph $\graph = (S,E)$ and with functions $V_s, s \in S$, the system has a common Lyapunov  function of the form
\begin{equation}
 V(x)  = \min_{S_1, \ldots, S_k \subseteq S} \left ( \max_{s \in S_i} V_s(x) \right ),
 \label{eq:embededLyapunovFunction}
\end{equation}
for some finite integer $k$, where the sets $S_i$ are subsets of the nodes of $\graph$.
Our proof is constructive and makes use of a classical tool from automata theory, namely the \emph{observer automaton} \cite{CaLaITDE}.
We then discuss in Subsection \ref{subsec:converse} the conservativeness of these common Lyapunov functions when using quadratic Path-Complete Lyapunov functions and argue that it is ultimately linked with the combinatorial nature of the graph itself.\\
Motivated by this, we provide in Section \ref{sec:Comparison} answers to the following question:\\
\textbf{Q2: }\emph{When does one graph lead to systematically less conservative stability certificates than another?}\\ 
 We say that a graph $\graph$ is \emph{more conservative} than a graph $\graph'$ if for any set of matrices $\mset$, the solvability of the LMIs corresponding to $\graph$ implies that of the LMIs for $\graph'$. We provide an algorithmic sufficient condition in Theorem \ref{thm:comparisonLP},  inspired by existing ad-hoc proofs for particular cases such as the ones presented in \cite[Section 4 and 5]{AhJuJSRA}, \cite[Theorem 3.5]{PhEsSODT}, \cite[Theorem 20]{GeGiAACL}. \\
Finally, in Section \ref{sec:DiscussionAndConclussion}, we comment on our results and conclude our work.

\begin{remark}
For the clarity of exposition, and because this is by far the most popular case, we restrict the presentation to linear switching systems under arbitrary switching and consider Path-Complete Lyapunov functions with quadratic functions as pieces. Our results can be generalized in several directions, e.g. to more general classes of pieces (continuous, positive definite and radially increasing) or to \emph{constrained switching systems} \cite{PhEsSODT}.
\end{remark}

\section{Preliminaries}
\label{sec:prel}

Given an integer $M$, we let $\alphabet{M}$ denote the set $\{1,\ldots,M\}$. Given a discrete set $X$, we let $|
X|$ denote the cardinality of the set.\\
We now define the central concept of this paper (see Figure \ref{intro:fig:graphs} for illustrations).
\begin{definition}[{Path-Completeness}]
A graph $\graph = (S,E)$ is \emph{Path-Complete} if for any $k \geq 1$ and any sequence $\sigma =
\sigma_1\ldots,\sigma_k$,  $\sigma_i \in \alphabet{M}$, there is a \emph{path} in 
the graph $(s_i, s_{i+1}, \sigma_i)_{i = 1, 2,
\ldots, k }$ with $(s_i, s_{i+1}, \sigma_i) \in E$.
\label{intro:def:PC}
\end{definition}
\begin{figure}[!ht]
\centering
\begin{subfigure}[c]{0.4\textwidth}
\centering
\begin{tikzpicture}[->,>=stealth',shorten >=1pt,auto,node distance=2.5cm,
                    semithick, scale = 1, transform shape ]
 \node[state] (a)                          {$a_2$};
 \node[state] (b)     [left of = a]       {$b_2$};
 \node[state] (c)     [right of = a]       {$c_2$};

  \path (a) edge [bend right] node [above] {$1$} (b)
  			edge node[above]{$2$} (b)
  			 edge [bend left] node {$1$} (c)
  			edge node{$2$} (c)
  		(b) edge [bend right] node [below] {$1$} (a)
  		(c) edge [bend left] node {$2$} (a)
  	    	;
\end{tikzpicture}
\caption{Path-Complete graph $\graph_2$. It corresponds to a multiple Lyapunov function with 3 functions
$V_{a_2}, \, V_{b_2}, \, V_{c_2}$ satisfying 6 Lyapunov inequalities.}
\label{fig:PCExample_2}
\end{subfigure}
\begin{subfigure}[c]{0.4\textwidth}
\centering
\begin{tikzpicture}[->,>=stealth',shorten >=1pt,auto,node distance=2.5cm,
                    semithick, scale = 1, transform shape ]
 \node[state] (a)                          {$a_3$};
 \node[state] (b)     [left of = a]       {$b_3$};
 \node[state] (c)     [right of = a]       {$c_3$};

  \path (a) edge [bend right] node [above] {$1$} (b)
  			edge node[above]{$2$} (b)
  			 edge [bend left] node {$1$} (c)
  		(b) edge [bend right] node [below] {$1$} (a)
  		(c) edge [bend left] node {$2$} (a)
  	    	;
\end{tikzpicture}
\caption{Graph $\graph_3$. It is not Path-Complete as it cannot generate the sequence $222$. }
\label{fig:PCExample_3}
\end{subfigure}
\caption{ Illustration for Definition \ref{intro:def:PC}. }
\label{intro:fig:graphs}
\end{figure}
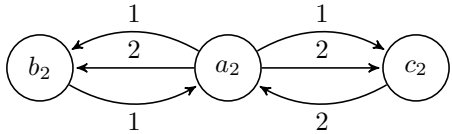
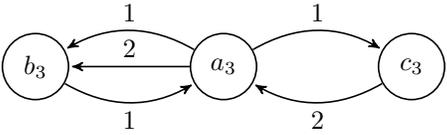

As said above, a Path-Complete Lyapunov function is a multiple Lyapunov function where Lyapunov inequalities between a set of \emph{quadratic positive definite functions} are encoded in a labeled and directed graph $\graph = (S,E)$, with one function per node in $S$.
We represent a set of  such functions in a vector form. 
\begin{definition}[\textbf{{VLFC}}]
A \emph{Vector Lyapunov Function Candidate (VLFC)} is a vector function $V : \reels^n \rightarrow \reels^N_{\geq 0}$,
where each element $V_i: \reels^n \mapsto \reels_{\geq 0}$, $i\in \alphabet{N}$ is a positive definite quadratic function.
\end{definition}
Given a graph $\graph = (S,E)$ and a VLFC $V : \reels^n \rightarrow \reels^{|S|}_{\geq 0}$ we let $V_s$ be the function for the node $s \in S$.

\begin{definition}[Path-Complete Lyapunov function (PCLF)]
Given a switching System \eqref{intro:eq:swsys} on a set of $M$ matrices $\mset$ of dimension $n$ and a \emph{Path-Complete graph} $\graph = (S,E)$ on $M$ labels, a \emph{Path-Complete Lyapunov function} for $\graph$ and $\mset$ is a VLFC $V : \reels^n \rightarrow \reels^{|S|}_{\geq 0}$ such that
\begin{equation}
\forall x \in \reels^n, \forall (s,d,\sigma) \in E: V_d(A_\sigma x) \leq V_s(x).
\label{eq:feasi1}
\end{equation}
We write $V \sim \pclf(\graph, \mset) $
to denote the fact that $V$ is a Path-Complete Lyapunov function for $\graph$ and $\mset$.
\label{def:PCLF}
\end{definition}

\section{Extracting Common Lyapunov Functions}
\label{sec:PCLFtoCLF}

In  this section we are given a Path-Complete graph $\graph$, a set of matrices $\mset$, and a Path-Complete Lyapunov function $V \sim \pclf(\graph, \mset)$, and we want to construct a common Lyapunov function for the switching system on the set $\mset$. In order to do so, we provide an algorithm relying on concepts from \emph{Automata theory} (see e.g. \cite[Chapter 2]{CaLaITDE}). 
Preliminary versions of our results in this section have been presented in the conference paper \cite{AnPhPCGC}.

\subsection{Main Result}

Our main result exploits the structure of Path-Complete graphs to combine the functions of a PCLF into a common Lyapunov function for the System \eqref{intro:eq:swsys}. 
The following proposition is the first step to achieve this.

\begin{proposition} 
Consider a Path-Complete graph $\graph = (S,E)$ on $M$ labels, a set of $M$ matrices $\mset$, and a PCLF $V \sim \pclf(\graph, \mset)$.
 Take two subsets $P$ and $Q$ of $S$. 
\begin{itemize}
 \item If there is a label $\sigma$ such that
\begin{equation}
\forall p \in P, \,\exists q \in Q: \, (p,q,\sigma) \in E,
\label{eq:left-total_relation}
\end{equation}
 then
\begin{equation}
\forall x \in \reels^n: \min_{q \in Q} V_q(A_\sigma x) \leq \min_{p \in P}{V_p(x)}.
\label{eq:left-total_relation:min}
\end{equation}
\item If there is a label $\sigma$ such that
\begin{equation}
\forall q \in Q, \,\exists p \in P: \, (p,q,\sigma) \in E,
\label{eq:right-total_relation}
\end{equation}
 then
\begin{equation}
\forall x \in \reels^n:  \max_{q \in Q} V_q(A_\sigma x) \leq \max_{p \in P}{V_p(x)}.
 \label{eq:right-total_relation:max}
 \end{equation}
\end{itemize} 
\label{prop:hiddenEq:complete}
\end{proposition}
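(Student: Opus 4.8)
The plan is to prove the two bullets essentially symmetrically, so I would focus on the first one and then indicate how the argument dualizes. Fix $x \in \reels^n$ and a label $\sigma$ satisfying the left-total condition \eqref{eq:left-total_relation}. The goal is to bound $\min_{q \in Q} V_q(A_\sigma x)$ from above by $\min_{p \in P} V_p(x)$. The natural move is to pick the minimizer on the right-hand side: let $p^* \in P$ be such that $V_{p^*}(x) = \min_{p \in P} V_p(x)$. By \eqref{eq:left-total_relation} applied to $p = p^*$, there exists $q^* \in Q$ with $(p^*, q^*, \sigma) \in E$. Since $V \sim \pclf(\graph, \mset)$, the edge inequality \eqref{eq:feasi1} gives $V_{q^*}(A_\sigma x) \leq V_{p^*}(x)$. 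Finally, $\min_{q \in Q} V_q(A_\sigma x) \leq V_{q^*}(A_\sigma x) \leq V_{p^*}(x) = \min_{p \in P} V_p(x)$, which is exactly \eqref{eq:left-total_relation:min}. Since $x$ was arbitrary, the first bullet follows.

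For the second bullet, assume the right-total condition \eqref{eq:right-total_relation} and fix $x$. Now I would start from the left-hand side of the desired inequality \eqref{eq:right-total_relation:max}: let $q^* \in Q$ be such that $V_{q^*}(A_\sigma x) = \max_{q \in Q} V_q(A_\sigma x)$. By \eqref{eq:right-total_relation} applied to $q = q^*$, there exists $p^* \in P$ with $(p^*, q^*, \sigma) \in E$, and the edge inequality \eqref{eq:feasi1} gives $V_{q^*}(A_\sigma x) \leq V_{p^*}(x) \leq \max_{p \in P} V_p(x)$. Chaining, $\max_{q \in Q} V_q(A_\sigma x) = V_{q^*}(A_\sigma x) \leq \max_{p \in P} V_p(x)$, as claimed.

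The proof is elementary and there is no real obstacle; the only thing to be careful about is the direction in which one selects the extremal index. For the $\min$ statement one must select the minimizer on the source side $P$ (so that the existential quantifier over $Q$ can be invoked), whereas for the $\max$ statement one must select the maximizer on the target side $Q$ (so that the existential quantifier over $P$ can be invoked). Getting these backwards would break the chain of inequalities. Everything else — that the minima and maxima over the finite sets $P, Q$ are attained, and that $V_s$ is well-defined on all of $\reels^n$ — is immediate from the setup, and the quantifier "$\forall x$" is handled simply by noting the argument used no property of $x$.
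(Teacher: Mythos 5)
Your proof is correct. It differs from the paper's argument in route, though not in substance: the paper gives a \emph{geometric} proof, translating each edge inequality $V_d(A_\sigma x)\leq V_s(x)$ into the level-set inclusion $A_\sigma X_s \subseteq X_d$ with $X_s=\{x \mid V_s(x)\leq 1\}$, and then observing that condition \eqref{eq:left-total_relation} forces $A_\sigma \bigcup_{p\in P}X_p \subseteq \bigcup_{q\in Q}X_q$ (the union being the level set of the min) while condition \eqref{eq:right-total_relation} forces $A_\sigma \bigcap_{p\in P}X_p \subseteq \bigcap_{q\in Q}X_q$ (the intersection being the level set of the max). Your pointwise argument --- choosing the minimizer on the source side $P$ for the min statement and the maximizer on the target side $Q$ for the max statement, then invoking the edge inequality \eqref{eq:feasi1} once --- is more elementary and does not use the homogeneity of the quadratic pieces that underlies the equivalence between the pointwise inequality and the level-set inclusion, so it applies verbatim to arbitrary (e.g.\ non-homogeneous) positive definite pieces. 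What the paper's version buys is precisely the geometric picture the article is built around: it exhibits the implied invariant sets (unions and intersections of ellipsoids) that reappear in Corollary \ref{cor:CLF(co)complete}, Example \ref{ex:ExampleCLF1} and Theorem \ref{thm:EmbeddedCLF}. Your closing remark about which side the extremal index must be chosen on is exactly the right point of care, and mirrors the asymmetry between the left-total and right-total conditions.
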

We propose a geometric proof for the above.
\begin{proof}
Take a graph $\graph = (S,E)$ with $M$ labels, a set of $M$ matrices $\mset$, and a PCLF $V \sim \pclf(\graph, \mset)$. 
For any node $s \in S$,  define the one-level set 
$$X_{s} = \{x \in \reels^n \mid V_s(x) \leq 1\}.$$
For any edge $(s,d,\sigma) \in E$, \eqref{eq:lyapIneqCLF} is equivalent to 
$ A_\sigma X_{s} \subseteq X_{d},$
where $A_\sigma X_s = \{A_\sigma x \mid x \in X_s\}$. 
In light of this, \eqref{eq:left-total_relation} is equivalent to 
$$  \forall p \in P, \exists q \in Q: A_\sigma X_{p} \subseteq X_{q}, $$
and from there it is easy to conclude that
$$   A_\sigma \bigcup_{p \in P}   X_{p} = \bigcup_{p \in P} A_\sigma  X_{p} \subseteq \bigcup_{q \in Q} X_{q}. $$
This is then equivalent to \eqref{eq:left-total_relation:min} since the union of the level sets of the functions $V_q, q \in Q$ is the level set of the function $\min_{q \in Q} V_q$. \\
Similarly, \eqref{eq:right-total_relation} is
$$ \forall q \in Q, \exists p \in P: A_\sigma X_{p} \subseteq X_{q}$$
which in turn implies 
$$A_\sigma \bigcap_{p \in P} X_{p}  = \bigcap_{p \in P}  A_\sigma X_{p}  \subseteq  \bigcap_{q \in Q}  X_{q} , $$
which is equivalent to \eqref{eq:right-total_relation:max} since the the intersection level sets of the functions $V_q, q \in Q$ is the level set of the function $\max_{q \in Q} V_q$.
\end{proof}

The above proposition can already be put to good use to extract common Lyapunov functions from Path-Complete Lyapunov functions where the graph is either \emph{complete or co-complete} (see \cite{SaEOAT} Definition 1.12).
\begin{definition}[(Co)-Complete Graph]
A graph  $\graph = (S,E)$ is \emph{complete} if for all $s \in S$, for all $\sigma \in \alphabet{M}$ there exists at least one edge $(s, q, \sigma) \in E$.\\
The  graph is \emph{co-complete} if for all $q \in S$, for all $\sigma \in \alphabet{M}$, there exists at least one edge $(s, q, \sigma) \in E$.
\label{def:(co)complete}
\end{definition}
Note that the graph $\graph_1$ in Figure \ref{fig:PCExample_1} is co-complete.
On can check that if a graph is complete (or co-complete), it is Path-Complete as well.
Proposition \ref{prop:hiddenEq:complete} has the following corollary.
\begin{corollary}
Consider a graph $\graph$ on $M$ modes, a set of $M$ matrices $\mset$, and a PCLF $V  \sim \pclf(\graph, \mset)$. 
\begin{itemize}
\item If $\graph$ is complete, then
$ \bar{V}(x) = \min_{s \in S} V_s(x) $
is a common Lyapunov function for System \eqref{intro:eq:swsys}.
\item If $\graph$ is co-complete, then $ \bar{V}(x) = \max_{s \in S} V_s(x) $
is a common Lyapunov function for System \eqref{intro:eq:swsys}.
\end{itemize}
\label{cor:CLF(co)complete}
\end{corollary}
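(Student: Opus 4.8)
The plan is to derive Corollary~\ref{cor:CLF(co)complete} directly from Proposition~\ref{prop:hiddenEq:complete} by taking $P = Q = S$ and letting $\sigma$ range over all labels. For the first bullet, suppose $\graph$ is complete. Then for every $\sigma \in \alphabet{M}$ and every $p \in S$ there is an edge $(p, q, \sigma) \in E$ for some $q \in S$; in other words, condition \eqref{eq:left-total_relation} holds with $P = Q = S$ for this particular $\sigma$. Proposition~\ref{prop:hiddenEq:complete} then yields $\min_{q \in S} V_q(A_\sigma x) \leq \min_{p \in S} V_p(x)$ for all $x \in \reels^n$. Since this holds for each $\sigma \in \alphabet{M}$, the function $\bar V(x) = \min_{s \in S} V_s(x)$ satisfies $\bar V(A_\sigma x) \leq \bar V(x)$ for all $x$ and all modes $\sigma$, which is exactly \eqref{eq:lyapIneqCLF}. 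The argument for the co-complete case is symmetric: co-completeness says that for every $\sigma$ and every $q \in S$ there is an edge $(p,q,\sigma) \in E$, i.e.\ condition \eqref{eq:right-total_relation} with $P = Q = S$, so Proposition~\ref{prop:hiddenEq:complete} gives $\max_{q\in S} V_q(A_\sigma x) \leq \max_{p \in S} V_p(x)$, and hence $\bar V(x) = \max_{s\in S} V_s(x)$ is a common Lyapunov function.

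The only points that need a sentence of justification beyond the mechanical application of the proposition are: (i) that $\bar V$ is itself a legitimate Lyapunov function candidate, i.e.\ that a finite minimum (resp.\ maximum) of positive definite quadratic functions is continuous, positive definite, and radially unbounded — this is immediate since each $V_s$ has these properties and there are finitely many nodes; and (ii) that the Lyapunov inequality $\bar V(A_\sigma x) \le \bar V(x)$ holding for every $\sigma \in \alphabet{M}$ is precisely the statement that $\bar V$ is common to the whole switching system, hence certifies stability under arbitrary switching.

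I do not anticipate a genuine obstacle here: the corollary is essentially a specialization of the proposition to $P = Q = S$, and the bulk of the content is already carried by the geometric lemma (the level-set inclusions $A_\sigma X_s \subseteq X_d$ aggregated over unions or intersections). If anything, the one subtlety worth stating explicitly is that in the complete case one applies the \emph{left-total} branch of the proposition once per label $\sigma$ — each application uses that $\sigma$'s own out-edges — and then quantifies over $\sigma$ at the end; and dually for co-complete. I would therefore keep the proof to three or four lines, simply writing ``Apply Proposition~\ref{prop:hiddenEq:complete} with $P = Q = S$'' for each case and noting that completeness (resp.\ co-completeness) is exactly the hypothesis \eqref{eq:left-total_relation} (resp.\ \eqref{eq:right-total_relation}) for every label.
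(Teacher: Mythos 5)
Your proof is correct and matches the paper's intent exactly: the paper presents Corollary~\ref{cor:CLF(co)complete} as an immediate consequence of Proposition~\ref{prop:hiddenEq:complete}, and your specialization $P = Q = S$ applied once per label $\sigma$ (left-total branch for complete, right-total branch for co-complete) is precisely that argument. No gaps.
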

\begin{example}

We consider the following switching system consisting of $M=2$ modes: $x({t+1})  =  A_{\sigma(t)} x(t)$, $\sigma(t) \in \{1,2\} $, with 
\begin{equation}
A_1 = \alpha \begin{pmatrix}  1.3 & 0 \\ 1 & 0.3 \end{pmatrix} , A_2 = \alpha \begin{pmatrix} -0.3 & 1 \\ 0 & -1.3\end{pmatrix},
\label{eq:exampleSystem}
\end{equation}
with $\alpha = (1.4)^{-1}$.
This system does not have a common quadratic Lyapunov function.
However, for the graph $\graph_1$ represented in Figure \ref{fig:PCExample_1}, we have the following Path-Complete Lyapunov function:
\begin{equation}
\vset = \left \{
\begin{aligned}
&V_{a_1} (x) = 5x_1^2+x_2^2,\\
& V_{b_1}  (x) = x_1^2+5x_2^2 
\end{aligned}\right \}.
\label{eq:ex1Solution}
\end{equation}
The graph $\graph_1$ being co-complete, the function $\bar{V}(x) = \max (V_{a_3}(x), V_{b_3}(x))$ is a common Lyapunov function for the system.
This is represented in Figure \ref{fig:lvlSets}, where we see that the intersection of the level sets of $V_{a_3}$ and $V_{b_3}$, which is that of $\bar{V}$, is itself invariant. 
\begin{figure}
\centering
\input{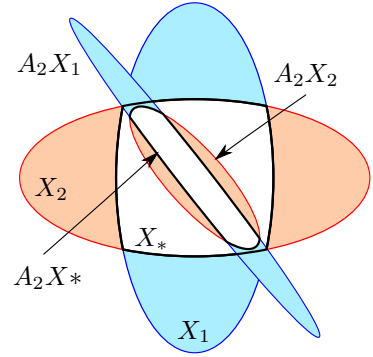}
\caption{
Example \ref{ex:ExampleCLF1}: Geometric representation of the Path-Complete stability criterion corresponding to the graph $\graph_1$ in Figure \ref{fig:PCExample_1}. The ellipsoids $X_1$ and $X_2$ are the level sets of the quadratic functions $V_{a_1}$ and $V_{b_1}$ from Example \ref{ex:ExampleCLF1} respectively, and $X_*$ is the level set of $\max(V_{a_1}(x), V_{b_1}(x))$.  The set $X_*$ is invariant, as illustrated by the fact that the set $A_2 X_*$ is in $X_*$. }
\label{fig:lvlSets}
\end{figure}
\label{ex:ExampleCLF1}
\end{example}

In order to tackle Path-Complete graphs that are neither complete nor co-complete (as in Figure \ref{fig:PCExample_4}), we introduce  the following concept. 
\begin{definition}[Observer Graph,  {\cite[Section 2.3.4]{CaLaITDE}}]
Consider a graph $\graph = (S,E)$. The \emph{observer} graph $\graph^{obs} = (S^{obs}, E^{obs})$ is a graph where each state corresponds to a subset of $S$, i.e. $S^{obs} \subseteq 2^S$, and is constructed as follows:
\begin{itemize}
\item[1.] Initialize $S^{obs} := \{ S \}$ and $E^{obs} := \emptyset$.
\item[2.] Let $X:=\emptyset$. For each pair $(P,\sigma) \in S^{obs} \times \alphabet{M}$: 
\begin{enumerate}[label=(\roman*)]
\item Compute $Q := \underset{p \in P}{\cup} \{q | \, (p,q,\sigma) \in E\}.$
\item If $Q \neq \emptyset$, set $E^{obs}:=E^{obs}\cup\{(P,Q,\sigma)\}$ then  $X:=X\cup Q$.
\end{enumerate}
\item[3.] If  $X \subseteq S^{obs}$, then the observer is  given by $\graph^{obs} = (S^{obs}, E^{obs})$. Else, let
 $S^{obs} := S^{obs} \cup X$ and go to step 2.
\end{itemize}
\label{def:observers}
\end{definition}

\begin{example}
Consider the graph $\graph_4$ in Figure \ref{fig:PCExample_4}.
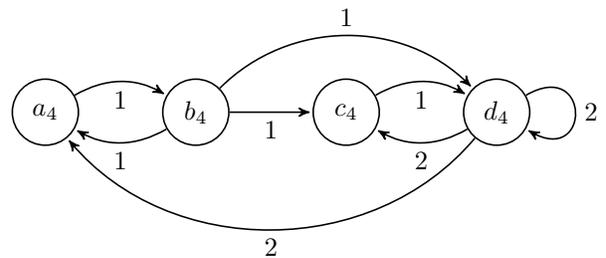
\begin{figure}[!ht]
\centering
\begin{tikzpicture}[->,>=stealth',shorten >=1pt,auto,node distance=2cm,
                    semithick, scale = 1, transform shape ]
 \node[state] (a)                          {$a_4$};
 \node[state] (b)     [right of = a]       {$b_4$};
 \node[state] (c)     [right of = b]       {$c_4$};
 \node[state] (d)     [right of = c]       {$d_4$};

  \path (a) edge [bend left]        node [below]{$1$} (b)
  	    (c) edge [bend left] node [below]{$1$} (d)
  	    (b) edge [bend left] node [below]{$1$} (a)
  	    	edge  node [below]{$1$} (c)
  	    	edge [bend left = 45]node [above] {$1$} (d)
  	    (d) edge [bend left = 50]node{$2$} (a)
  	    	edge [bend left] node{$2$} (c)
  	    	edge [loop,  out = 30, in = -30, min distance = 10mm] node{$2$} (d)
  	    	;
\end{tikzpicture}
\caption{Path-Complete graph $\graph_4$ for Example \ref{ex:observer}.}
\label{fig:PCExample_4}
\end{figure}
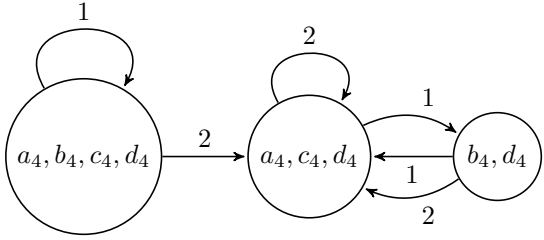
\begin{figure}[!ht]
\centering
\begin{tikzpicture}[->,>=stealth',shorten >=1pt,auto,node distance=2cm,
                    semithick, scale = 1, transform shape ]
 \node[state] (abcd)                          {$a_4,b_4,c_4,d_4$};
 \node[state] (acd)     [node distance = 3cm, right of = abcd]       {$a_4,c_4,d_4$};
 \node[state] (bd)     [node distance = 2.5cm, right of = acd]       {$b_4,d_4$};

  \path (abcd) edge [loop,  out = 120, in = 60, min distance = 12mm] node {$1$} (abcd)
  				edge node {$2$} (acd)
  		(acd) edge [loop,  out = 120, in = 60, min distance = 10mm] node {$2$} (acd)
  			  edge [bend left] node {$1$} (bd)
  	    (bd) edge node {$1$} (acd)
  	    	edge [bend left] node{$2$} (acd)
  	    	;
\end{tikzpicture}
\caption{Observer graph $\graph_4^{obs}$ of the graph $\graph_4$ in Figure \ref{fig:PCExample_4}. The nodes of $\graph_4^{obs}$ are associated to sets of nodes of $\graph_4$. }
\label{fig:PCExample_4_Obs}
\end{figure}
The observer graph $\graph^{obs}_4$ is given in Figure \ref{fig:PCExample_4_Obs}. The first run through step 2 in Definition \ref{def:observers} is as follows. We have $P = S$. For $\sigma = 1$ the set $Q$ is again $S$ itself: indeed, each node $s \in S$ has at least one inbound edge with the label $1$. For $\sigma = 2$, since node $b_4$ has no inbound edge labeled $2$, we get $Q = \{a_4,c_4,d_4\}$. This set is then added to $S^{obs}$ in step 3, and the algorithm repeats step 2 with the updated $S^{obs}$.\\
\label{ex:observer}
\end{example}

\begin{remark}
The \emph{observer automaton} is presented in \cite[Section 2.3.4]{CaLaITDE}. 
Our definition of observer graph is an adaption in the particular case where the automaton considered has all states marked both as starting and accepting states.
\label{rem:GraphAndAutomata}
\end{remark}

We are now in position to introduce the main result of this section.

\begin{theorem}[CLF Representation of a PCLF]
Consider a set of $M$ matrices $\mset$ and a Path-Complete graph $\graph$. 
If there is a PCLF $V \sim \pclf(\graph, \mset)$, 
then a common Lyapunov function for System \eqref{intro:eq:swsys} is given by
\begin{equation}
V^*(x) = \min_{Q \in S^{obs}} \left ( \max_{s \in Q} V_s(x) \right ),
\label{eq:minmaxclf}
\end{equation}
where $S^{obs}$ is the set of nodes of the observer graph $\graph^{obs} = (S^{obs}, E^{obs})$ of the graph $\graph$.
\label{thm:EmbeddedCLF}
\end{theorem}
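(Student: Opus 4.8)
The plan is to show that $V^*$ defined in \eqref{eq:minmaxclf} satisfies the common Lyapunov inequality $V^*(A_\sigma x) \leq V^*(x)$ for every mode $\sigma \in \alphabet{M}$ and every $x \in \reels^n$, using Proposition \ref{prop:hiddenEq:complete} applied to the \emph{structure of the observer graph}. The key observation is that the observer graph $\graph^{obs}$ is, by construction, \emph{deterministic and complete}: for every node $P \in S^{obs}$ and every label $\sigma \in \alphabet{M}$ there is exactly one outgoing edge $(P, Q, \sigma)$, where $Q = \bigcup_{p \in P}\{q : (p,q,\sigma) \in E\}$ — and this $Q$ is nonempty precisely because $\graph$ is Path-Complete (so every node, and hence every nonempty union of nodes, has an outgoing edge for each label). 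That $Q$ is itself a node of $\graph^{obs}$ follows from the termination condition (step 3) of Definition \ref{def:observers}.

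First I would fix $\sigma$ and $x$, and let $P^* \in S^{obs}$ be a minimizer achieving $V^*(x) = \max_{s \in P^*} V_s(x)$. Let $Q^* = \bigcup_{p \in P^*}\{q : (p,q,\sigma) \in E\}$ be the (unique, nonempty) $\sigma$-successor of $P^*$ in $\graph^{obs}$, so $Q^* \in S^{obs}$. The relation defining $Q^*$ is exactly the \emph{right-total} condition \eqref{eq:right-total_relation} of Proposition \ref{prop:hiddenEq:complete} with $P = P^*$ and $Q = Q^*$: indeed for every $q \in Q^*$ there is by definition some $p \in P^*$ with $(p,q,\sigma) \in E$. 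Hence the proposition gives
\begin{equation*}
\max_{q \in Q^*} V_q(A_\sigma x) \leq \max_{p \in P^*} V_p(x) = V^*(x).
\end{equation*}
On the other hand, since $Q^* \in S^{obs}$, by definition of $V^*$ as a minimum over all observer nodes we have $V^*(A_\sigma x) \leq \max_{q \in Q^*} V_q(A_\sigma x)$. Chaining the two inequalities yields $V^*(A_\sigma x) \leq V^*(x)$, as desired. Since $V^*$ is a minimum of maxima of finitely many positive definite quadratic functions, it is continuous, positive definite and radially unbounded, so it is a genuine common Lyapunov function; asymptotic stability follows because the defining set $S^{obs}$ is finite (the decrease can be made strict by a standard compactness/scaling argument, or one simply notes that existence of a CLF of this form already certifies stability under arbitrary switching, and a strict version is obtained by applying the argument to a slightly scaled system, or by invoking that a PCLF with strict inequalities produces a strict $V^*$).

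The main obstacle — really the only substantive point beyond bookkeeping — is to argue cleanly that the successor set $Q^*$ is always a \emph{nonempty} node of $S^{obs}$, i.e. that the observer construction is well-defined and terminates, and that Path-Completeness of $\graph$ guarantees $Q^* \neq \emptyset$ for every reachable $P^*$ and every label. Termination is immediate because $S^{obs} \subseteq 2^S$ is finite and grows monotonically, so step 2–3 can loop only finitely often. Nonemptiness: one shows by induction that every node $P \in S^{obs}$ has the property that for each $\sigma$ there is a $\sigma$-labeled edge leaving at least one $p \in P$ — this is inherited from the fact that $S$ itself has this property (a consequence of Path-Completeness, taking length-one sequences) and that the successor operation preserves it (if $P$ has an outgoing $\tau$-edge from some vertex for every $\tau$, then so does $Q = \bigcup_{p}\{q : (p,q,\sigma)\in E\}$, again because each individual vertex of $\graph$ does). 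I would state this as a short preliminary lemma (or fold it into the proof), then the rest is the three-line inequality chain above.
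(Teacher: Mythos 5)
Your overall strategy is the paper's: use the right-total part of Proposition \ref{prop:hiddenEq:complete} along the edges of $\graph^{obs}$ to get $\max_{q\in Q}V_q(A_\sigma x)\leq \max_{p\in P}V_p(x)$, and then conclude by taking the minimum over observer nodes, which requires exactly the combinatorial fact that $\graph^{obs}$ is \emph{complete}. The inequality-chaining part of your argument is fine (it is essentially Corollary \ref{cor:CLF(co)complete} applied to the max-of-quadratics pieces $\tilde V_Q=\max_{q\in Q}V_q$). The gap is in your justification of completeness of the observer graph. You assert that Path-Completeness implies that ``every node, and hence every nonempty union of nodes, has an outgoing edge for each label.'' This is false: Path-Completeness (Definition \ref{intro:def:PC}) only requires that each label sequence be carried by \emph{some} path, not that every node can continue every sequence. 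For instance, the graph with nodes $\{a,b\}$ and edges $(a,a,1)$, $(a,a,2)$, $(b,a,1)$ is Path-Complete (every word is realized by the self-loops at $a$), yet $b$ has no outgoing edge with label $2$. Consequently, the inductive ``preservation'' step you propose --- if some $p\in P$ has an outgoing $\tau$-edge for every $\tau$, then so does the $\sigma$-successor $Q$, ``because each individual vertex of $\graph$ does'' --- rests on a false premise, and the preservation statement itself fails for general subsets: in the Path-Complete graph with nodes $\{a,b,c\}$ and edges $(a,a,1)$, $(a,a,2)$, $(b,c,1)$, $(b,c,2)$, $(c,b,1)$, the set $P=\{b\}$ has outgoing edges for both labels, while its $2$-successor $\{c\}$ has no outgoing $2$-edge. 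So your induction does not go through as stated; the fact that it happens to hold for the nodes reachable in the observer construction is precisely what needs a different argument.

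The correct argument (the one in the paper) uses a global property of observer nodes rather than a local preservation step: by construction of $\graph^{obs}$, the node $P\in S^{obs}$ reached from the initial node $S$ by a path labeled $\sigma_1\cdots\sigma_k$ consists of \emph{all} nodes of $\graph$ that are endpoints of some path in $\graph$ carrying the labels $\sigma_1\cdots\sigma_k$ (this is the easy induction to do). If $P$ had no outgoing edge labeled $\sigma^*$ in $\graph^{obs}$, then no node of $P$ has an outgoing $\sigma^*$-edge in $\graph$, hence no path in $\graph$ carries the sequence $\sigma_1\cdots\sigma_k\sigma^*$, contradicting Path-Completeness. Replacing your nonemptiness lemma by this contraposition argument repairs the proof; the rest of your write-up (choice of the minimizing node $P^*$, application of Proposition \ref{prop:hiddenEq:complete}, and the termination of the observer construction) is sound. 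The remarks about strict decrease and asymptotic stability are not needed for the statement as given, which only claims a common Lyapunov function in the non-strict sense of \eqref{eq:lyapIneqCLF}.
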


\begin{proof}
Consider a Path-Complete graph $\graph = (S,E)$, a set of matrices $\mset$, and a PCLF $V \sim \text{pclf}(\graph, \mset)$.  Construct the \emph{observer graph} $\graph^{obs} = (S^{obs},E^{obs})$. By construction, there is an edge $(P,Q,\sigma) \in E^{obs}$ if and only if $Q = \cup_{p \in P} \{q | \, (p,q,\sigma) \in E\}$. This means that $\forall q \in Q$, $\exists p \in P$ such that $(p,q,\sigma)\in E$, which is \eqref{eq:right-total_relation}.
Consequently, from Proposition \ref{prop:hiddenEq:complete}, we have that
$$
\begin{aligned}
 (P,Q,\sigma) \in E^{obs} & \Rightarrow \\
  & \forall x \in \reels^n : \max_{q \in Q} V_q( A_\sigma x) \leq \max_{p \in P}{V_p(x)}. 
\end{aligned}
$$ 
We now consider a set of functions $\tilde{V} = \{ \tilde{V}_Q, Q \in S^{obs}\}$ where $\tilde{V}_Q = \max_{q \in Q} V_q$, and observe that
$$ \forall (P,Q,\sigma) \in E^{obs}, \forall x \in \reels^n: \tilde{V}_Q(A_\sigma x) \leq \tilde{V}_P(x), $$
which means that these  functions satisfy all Lyapunov inequalities encoded in  $\graph^{obs}$. 

Our next step is now to make use of Proposition \ref{prop:hiddenEq:complete} and Corollary \ref{cor:CLF(co)complete} to show that the pointwise-minimum of these functions is a CLF for the system. \\
To achieve this, first observe that the proof of Proposition \ref{prop:hiddenEq:complete} holds verbatim when instead of having a PCLF where each entry $V_s$ is quadratic, we take each entry to be \emph{a pointwise-maximum of quadratics} as in the above.\\
 Second, we claim that the \emph{observer graph of a Path-Complete graph} is \emph{complete}.
We do this by contraposition: if $\graph^{obs} = (S^{obs}, E^{obs})$ is not complete, then it must be so that $\graph$ is not Path-Complete. 
We emphasize that the nodes of an observer graph correspond to sets of nodes in the original graph, and refer to them as thus.\\
Assume that there is one set of nodes $P \in S^{obs}$, $P \subsetneq S$, and a label $\sigma^* \in \alphabet{M}$ such that there are no edges $(P,Q,\sigma) \in E^{obs}$. By construction of $\graph^{obs}$, there are directed paths from the node $S \in S^{obs}$ (corresponding to the full set of nodes in $\graph$) to the node $P$ above. Take any of such paths, and let $\sigma_1, \ldots, \sigma_k$ be the sequence of labels on that path. By the definition of the observer graph, we know that $P$ contains all the nodes of $\graph$ that are the destination of some path carrying that sequence of labels. 
By our choice of $P$, we know that none of these nodes have an outgoing edge with the label $\sigma^*$. Otherwise, there would be an edge $(P,Q,\sigma*) \in E^{obs}$ for some $Q$. We conclude that the sequence $\sigma_1 \ldots \sigma_k \sigma^*$ can not be found on a path in $\graph$, and $\graph$ is therefore not Path-Complete.\\
 Since $\graph^{obs}$ is complete, we can use Corollary \ref{cor:CLF(co)complete} to deduce that the function 
$$ V^* = \min_{Q \in S^{obs}} \tilde{V}_Q $$
is a common Lyapunov function for the system \eqref{intro:def:PC} on the set $\mset$, which concludes the proof.
\end{proof}

\begin{example}
Consider the following set of matrices taken from \cite[Example 11]{GoHuDMII}:
\begin{equation}
\mset = \left \{\alpha \begin{pmatrix}
  0.3 & 1 & 0 \\
  0 & 0.6 & 1\\
  0 & 0 & 0.7
\end{pmatrix}, \alpha \begin{pmatrix}
0.3 & 0 & 0 \\
-0.5 & 0.7 & 0 \\
-0.2 & -0.5 & 0.7
\end{pmatrix} \right \}
\label{eq:matricesForExample}
\end{equation}\\
 with the choice of $\alpha = 1.03$. 
 That switching system has a Path-Complete Lyapunov function $V$ for the graph $\graph_4 = (S,E)$ in Figure \ref{fig:PCExample_4}.
 After inspecting the observer graph $\graph^{obs}$,
 we compute a common Lyapunov function for the system as the minimum of the two functions $\max \left ( V_{a_4}, V_{c_4}, V_{d_4}\right)$ and $\max \left ( V_{b_4}, V_{d_4}\right)$. The last term $\max_{s \in S} \left (V_s) \right ) $ can be omitted here.
Figure \ref{fig:evolutionInTimeOfPCLF}  shows the evolution in time of the four functions, and that of the common Lyapunov function, from the initial condition $x(0) = \begin{pmatrix}
0 & 0 & -1
\end{pmatrix}^\top$, and for the \emph{periodic} switching sequence repeating the pattern $21111$.  
\begin{figure}[!ht]
\centering
\includegraphics[width = \columnwidth]{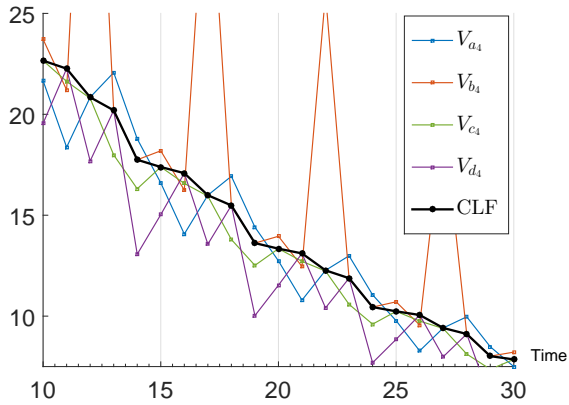}
\caption{Evolution of a PCLF for the system in Example \ref{exa:ExamplePCLFEvolves} for the graph $\graph_4$ in Figure \ref{fig:PCExample_4}. Observe that none of the pieces of the PCLF decreases monotonically. However, the common Lyapunov function (in black) does decrease monotonically.}
\label{fig:evolutionInTimeOfPCLF}
\end{figure}
\label{exa:ExamplePCLFEvolves}
\end{example}

\subsection{Comparison with classical piecewise quadratic Lyapunov Functions.}
\label{subsec:converse}

Our results highlight a link between Path-Complete Lyapunov functions and common Lyapunov functions that are mins or max of functions. It is thus natural to ask whether or not \emph{any} Lyapunov function of the form (\ref{eq:embededLyapunovFunction}) can be induced from a Path-Complete graph with as many nodes as the number of pieces of the function itself.
We give a negative answer to this question, under the form of a counter example. To do so, we use the system in \cite[Example 11]{GoHuDMII}, where min-of-quadratic and max-of-quadratic Lyapunov functions have been studied.

\begin{example}

Consider the switching systems on the two modes with the matrices presented in \eqref{eq:matricesForExample} for $\alpha = 1$.
The system has a max-of-quadratics Lyapunov function $x \mapsto \max\{V_1(x),V_2(x) \}$,
with $V_i(x) = \left ( x^\top   Q_i x \right )$, where
$$
\begin{aligned}
& Q_1 =
\begin{pmatrix}
  36.95 & -36.91 & -5.58 \\
   -36.91 & 84.11 & -38.47\\
  -5.58 & -38.47 & 49.32
\end{pmatrix}, \\
& Q_2 = \begin{pmatrix}
  13.80 & -6.69 & 4.80 \\
  -6.69 & 21.87 & 10.11\\
  4.80  &  10.11 & 82.74
\end{pmatrix}.
\end{aligned}
$$
These functions are obtained by solving a set of Bilinear Matrix Inequalities (BMIs) (see \cite[Section 5]{GoHuDMII}).
However, for the same system, we cannot obtain a Path-Complete Lyapunov function that can then be represented as a max-of-quadratics with two pieces.
More precisely, we considered all the graphs $\graph = (S,E)$ that are co-complete on two nodes. There are 16 of them in total. For each of them, the convex optimization program  corresponding to the search of a Path-Complete Lyapunov function has no solutions.
\label{example:BMI}
\end{example}
The example above involves two approaches for trying to compute a max-of-quadratics Lyapunov function. The first approach relies on solving a set of BMIs.
As pointed out in \cite{GoHuDMII}, these BMIs can be hard to solve in general (there is no polynomial-time algorithm to solve them).
  In contrast, searching for a quadratic Path-Complete Lyapunov function can be done efficiently by solving linear matrix inequalities using convex optimization tools. We can therefore efficiently check if a max-of-quadratics common Lyapunov function corresponding to a PCLF exists. Nevertheless, as one can see in Example \ref{example:BMI}, the BMI approach can be less conservative than the approach using PCLFs. We leave for further work the question of understanding whether or not the BMI approach is less conservative than the PCLF approach in general.

Additionally,  among the 16 Path-Complete graphs tested in Example \ref{example:BMI}, some lead to more conservative stability certificates than others. For example,  some of these graphs have, at one of their two nodes, two self loops with different labels, as it is the case for the graph $\graph_0$ in Figure \ref{fig:PCExample_0}. If a Path-Complete Lyapunov Function is found for such a graph, then  the function for that node is itself a common quadratic Lyapunov function for the system,  and thus that graph is at least as conservative as the common quadratic Lyapunov function technique. 
However, there are pairs of graphs for which none of them is more conservative than the other.  This fact is illustrated in the following example.

\begin{example} 
Consider the three graphs $\graph_1$ in Figure \ref{fig:PCExample_1},  $\graph_5$ and $\graph_6$ in Figure \ref{fig:twoOtherGraphs}.
These graphs are co-complete.
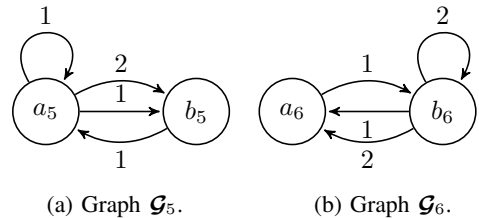
\begin{figure}[!ht]
\centering
\begin{subfigure}[c]{0.2\textwidth}
\centering
\begin{tikzpicture}[->,>=stealth',shorten >=1pt,auto,node distance=2cm,
                    semithick, scale = 1, transform shape ]
 \node[state] (V1)                           {$a_5$};
 \node[state] (V2)     [right of = V1]       {$b_5$};
 
  \path (V1)edge [loop,  out = 120, in = 60, min distance = 10mm]   node {$1$} (V1)
			edge             node {$1$} (V2)
      	    edge [bend left] node {$2$} (V2)
      (V2)    edge [bend left] node {$1$}            (V1);

\end{tikzpicture}
\caption{Graph $\graph_5$.  }
\label{fig:PCExample_5}
\end{subfigure}~
\begin{subfigure}[c]{0.2\textwidth}
\centering
\begin{tikzpicture}[->,>=stealth',shorten >=1pt,auto,node distance=2cm,
                    semithick, scale = 1, transform shape ]
 \node[state] (V1)                           {$a_6$};
 \node[state] (V2)     [right of = V1]       {$b_6$};
 
  \path (V2)edge [loop,  out = 120, in = 60, min distance = 10mm]   node {$2$} (V2)
			edge             node {$1$} (V1)
      	    edge [bend left] node {$2$} (V1)
      (V1)    edge [bend left] node {$1$}            (V2);
 
\end{tikzpicture}
\caption{Graph $\graph_6$.   }
\label{fig:PCExample_6}
\end{subfigure}
\caption{ Two co-deterministic Path-Complete graphs.  }
\label{fig:twoOtherGraphs}
\end{figure}

The system on two modes with matrices
 \begin{equation}
\mset = \left \{\alpha
\begin{pmatrix}
-0.5 & -1.1 \\
0.9 & 1.5
\end{pmatrix}, 
\alpha
\begin{pmatrix}
0.2 & 1.0 \\
0.5 & 0.5
\end{pmatrix}
\right  \},
\end{equation}
with $\alpha = (1.05)^{-1}$, has a quadratic PCLF for $\graph_1$ in Figure \ref{fig:PCExample_1} but neither for $\graph_5$ nor $\graph_6$ in Figures \ref{fig:PCExample_5} and \ref{fig:PCExample_6}.
The system on two modes with matrices
 \begin{equation}
\mset = \left \{\alpha
\begin{pmatrix}
0 & -0.2 \\
0.8 & 0
\end{pmatrix}, 
\alpha
\begin{pmatrix}
0.25 & 0.4 \\
0.1 & 0.3
\end{pmatrix}
\right  \},
 \end{equation}
with $\alpha = (0.55)^{-1}$ has a quadratic PCLF for the graph $\graph_6$, but neither for $\graph_3$ or $\graph_5$. The same set of matrices, when swapping the two modes, will have a PCLF for $\graph_5$, but not for the two other graphs.
\label{ex:notTotal}
\end{example}

The fact that PCLFs can be represented as common Lyapunov functions  closes a gap between multiple and common Lyapunov functions techniques. However, the discussion above illustrates that this does not directly help us to compare the conservativeness of PCLFs. Indeed, we see that several PCLFs lead to CLFs with similar structures, but even among these PCLFs it remains non-trivial to decide which ones are more conservative than others.
For this reason, in the next section, we provide novel tools for comparing the conservativeness of graph-Lyapunov functions based on their graphs.

\section{Comparing graphs}
\label{sec:Comparison}

Quadratic Path-Complete Lyapunov functions are particularly attractive in practice since given a graph $\graph = (S,E)$ and a set of matrices $\mset$, one can verify their existence  by solving a set of LMIs.  More precisely, we search for quadratic forms $\{Q_s \succ 0, s \in S\}$ satisfying the matrix inequalities
\begin{equation}
\forall (s,d,\sigma) \in E: A_\sigma^\top Q_d A_\sigma \preceq Q_s.
\label{eq:LMIsineq}
\end{equation}
Our goal in this section is to provide systematic tools to decide when, given two graphs $\graph$ and $\graph'$,  it is true that \emph{for any set of matrices  $\mset$}, the existence of a quadratic graph-Lyapunov function for $\graph$ implies that of a quadratic GLF for $\graph'$. 

This relates to the setting of  \cite{AhJuJSRA}, where Path-Complete Lyapunov functions with quadratic pieces are used for the \emph{approximation of the exponential growth rate}, a.k.a. the \emph{joint spectral radius  \cite{RoStANOT, JuTJSR}}, of switching systems. 
More precisely, for each graph $\graph = (S,E)$ with labels in $\alphabet{M}$, and for any set of $M$ matrices $\mset$ we let 
\begin{equation}
\begin{aligned}
\gamma(\graph,\mset) & = \inf_{Q, \gamma} \gamma : \\
& \qquad \forall (s,d,\sigma) \in E: A_\sigma^\top Q_d A_\sigma^{} \preceq \gamma^{2} Q_s, \\
& \qquad \forall s \in  S: Q_s \succ 0. \\
\end{aligned}
\label{chV:eq:CJSRApprox}
\end{equation}
We can then capture the fact that a graph leads to more conservative stability certificate than another by the following \emph{ordering relation}: given two Path-Complete graphs $\graph$ and $\graph'$ with labels in $\alphabet{M}$:
\begin{equation}
\begin{aligned}
& \graph \leq \graph' \text{ if } \forall n \in \naturals, \forall \mset \subset \reels^{n \times n}:\\
& \gamma(\graph,\mset) \geq \gamma(\graph', \mset).
\end{aligned}
\label{eq:ordering}
\end{equation}
As pointed out in Example \ref{ex:notTotal}, this above relation does not form a total order, since the graphs $\graph_1$, $\graph_5$ and $\graph_6$ are \emph{incomparable}.

The techniques we develop herein are inspired by existing ad-hoc proofs in the literature applying to particular cases/sets of graphs. Examples of these ad-hoc proofs can be found in \cite[Proposition 4.2 and Theorem 5.4]{AhJuJSRA}, \cite[Theorem 3.5]{PhEsSODT}. Typically, given two graphs $\graph$ and $\graph'$, these proofs proceed in two steps to show that $\graph \leq \graph'$. The first step is to propose a construction to transform any possible quadratic PCLF for $\graph$ into a candidate quadratic PCLF for $\graph'$. 
The second step is to check that the construction does indeed provide a PCLF for $\graph'$. 
\begin{example}
Take any set $\mset$ of two matrices , and let $V \sim \pclf(\graph_2, \mset)$ be a Path-Complete Lyapunov function for the graph $\graph_2 = (S_2,E_2)$ in Figure \ref{fig:PCExample_2}. Then, one can show that a Path-Complete Lyapunov function for the graph $\graph_1 = (S_1, E_1)$ in Figure \ref{fig:PCExample_1}  is given by $U = \left (U_{a_1}, U_{b_1} \right )^{\top}$, with
\begin{equation}
U_{a_1} = V_{a_2} + V_{b_2}, \, U_{b_1} = V_{a_2} + V_{c_2} .
\label{eq:Cexample}
\end{equation}
To prove this we need to show that each edge $(s,d,\sigma) \in E_2$ represents a valid Lyapunov inequality for the functions in $U$. For example, consider the edge $(a_1,b_1,2) \in E_1$. 
We need to show that 
$$ \forall x \in \reels^n: V_{a_2}(A_2 x) + V_{c_2}(A_2 x) \leq V_{a_2}(x) + V_{b_2}(x)$$ 
holds true. To do so, it suffices to write down and sum up the inequalities corresponding to the edges $(a_2,b_2,2)$ and $(c_2,b_2,2)$ in $E_2$, which are assumed to hold true for the choice of functions $V$. The reader can verify that a similar reasoning can be applied to all edges in $E_1$. Therefore, for any set of matrices $\mset$, as long as $V \sim \pclf(\graph_1, \mset)$, the set $U$ defined above satisfies $U \sim \pclf(\graph_1, \mset)$. 
\label{example:ordering0}
\end{example}

In this section, we focus on providing algorithmic tools to decide whenever a construction as in Example \ref{example:ordering0} exists. We formalize this as follows.

\begin{definition}
Consider two Path-Complete graphs $\graph = (S,E)$ and $\graph' = (S',E')$ on the same labels $\alphabet{M}$. We write 
$$ \graph \leq_\Sigma \graph' $$
if there is a matrix\footnote{The element $C_{s',s}$ appearing in \eqref{eq:conics} is the element for the row of $s' \in S'$ and the column of $s \in S$ of $C \in \reels^{|S'| \times |S|}_{\geq 0}$.}  $C \in \reels^{|S'| \times |S|}_{\geq 0}$, satisfying
\begin{equation}
\forall s' \in S': \sum_{s \in S} C_{s',s} \geq 1,
\label{eq:conics}
\end{equation}
such that for any set of $M$ matrices $\mset$ of dimension $n$ and Path-Complete Lyapunov function $V \sim \pclf(\graph, \mset)$, the VLFC $U : \reels^n \rightarrow \reels^{|S'|}_{\geq 0}$
where $$ \forall s' \in S', \forall x \in \reels^n: U_{s'}(x) = \sum_{s \in S} C_{s',s}V_s(x) $$
satisfies $U \sim \pclf(\graph', \mset)$.
\label{def:conicOrdering}
\end{definition}

Given two graphs, the property $\graph \leq_\Sigma \graph'$ is a \emph{sufficient condition} for the ordering $\graph \leq \graph'$ \eqref{eq:ordering}. This is due to the fact that the set of quadratic functions is closed under positive combinations.

The following allows us to express  \eqref{eq:feasi1} with vector inequalities.

\begin{definition}
Given a graph $\graph = (S,E)$ with a set of labels $\alphabet{M}$, and $\sigma \in \alphabet{M}$,  we define the two matrices $\Smat^\sigma(\graph) \in \{0,1\}^{|E_\sigma| \times |S|}$ and $\Dmat^\sigma(\graph) \in \{0,1\}^{|E_\sigma| \times |S|}$ as follows:
\begin{equation}
\begin{split}
& \Smat^{\sigma}_{e,s} = 1 \Leftrightarrow \exists d \in S:  e = (s, d, \sigma) \in E, \\
&  \Dmat^{\sigma}_{e,d} = 1 \Leftrightarrow \exists s \in S: e = (s, d, \sigma) \in E,
\end{split}
\label{eq_PQ}
\end{equation}
where $E_\sigma \subset E$ is the set of edges  with label~$\sigma$. 
\end{definition}
The construction of these matrices is illustrated in Example \ref{ex:SmatDmat}.
 For a graph $\graph$ and label $\sigma$, the matrix $\Smat^{\sigma}(\graph) - \Dmat^{\sigma}(\graph)$ is the incidence matrix \cite{CaLaITDE} of the subgraph of $\graph$ having only the edges with label $\sigma$.\\
Given a set of $M$ matrices $\mset$ of dimension $n$ and a Path-Complete graph $\graph = (S,E)$ on $M$ labels, a VLFC $V : \reels^n \rightarrow \reels^{|S|}_{\geq 0}$ satisfies $V \sim \pclf{\graph, \mset}$ if and only if
\begin{equation}
\forall \sigma \in \alphabet{M}, \forall x \in \reels^n: \Dmat^\sigma(\graph)V(A_\sigma x) \leq \Smat^\sigma(\graph)V(x),
\label{eq:feasVLFC}
\end{equation}
where the vector inequality is taken entrywise.\\
Our main result in this section is the following theorem, whose proof is detailed in Subsection \ref{proofOfMainThm}.

\begin{theorem}
Consider two graphs $\graph = (S,E)$ and $\graph' = (S',E')$ on $M$ labels.
The following statements are equivalent.
\begin{itemize}
\item[a)] The graphs satisfy $\graph \leq_\Sigma \graph'$.
\item[b)] There exists a matrix $C \in \reels^{|S'| \times |S|}_{\geq 0}$ satisfying \eqref{eq:conics} and one matrix $K_\sigma \in \reels^{|E^{'}_{\sigma}|, |E_{\sigma}|}_{\geq 0}$ per label $\sigma \in \alphabet{M}$ such that 
\begin{align}
&\Smat^\sigma(\graph')C \geq K_\sigma \Smat^\sigma(\graph), \label{eq:theLP-S} \\
&\Dmat^\sigma(\graph')C \leq K_\sigma \Dmat^\sigma(\graph).\label{eq:theLP-D}
\end{align}
\end{itemize}
\label{thm:comparisonLP}
\end{theorem}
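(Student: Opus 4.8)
The plan is to prove the two implications separately, with the direction (b)$\Rightarrow$(a) being the more computational one and (a)$\Rightarrow$(b) requiring a careful genericity/duality argument. For (b)$\Rightarrow$(a), suppose we are given $C$ and the matrices $K_\sigma$ satisfying \eqref{eq:conics}, \eqref{eq:theLP-S}, \eqref{eq:theLP-D}. Fix a set $\mset$ of $M$ matrices and a PCLF $V\sim\pclf(\graph,\mset)$, and define $U$ entrywise by $U_{s'}=\sum_{s}C_{s',s}V_s$, i.e.\ in vector form $U=CV$ (abusing notation by letting $C$ act on the vector of functions). Condition \eqref{eq:conics} guarantees each $U_{s'}$ is a nonnegative combination of positive definite quadratics with the coefficients summing to at least $1$, hence positive definite, so $U$ is a genuine VLFC. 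It remains to verify the Lyapunov inequalities \eqref{eq:feasVLFC} for $\graph'$: for each $\sigma$ and each $x$, I want $\Dmat^\sigma(\graph')U(A_\sigma x)\le \Smat^\sigma(\graph')U(x)$. Starting from $V\sim\pclf(\graph,\mset)$ we have $\Dmat^\sigma(\graph)V(A_\sigma x)\le \Smat^\sigma(\graph)V(x)$; left-multiplying by the nonnegative matrix $K_\sigma$ preserves the inequality, giving $K_\sigma\Dmat^\sigma(\graph)V(A_\sigma x)\le K_\sigma\Smat^\sigma(\graph)V(x)$. Now chain this with \eqref{eq:theLP-D} applied to $V(A_\sigma x)\ge 0$ and \eqref{eq:theLP-S} applied to $V(x)\ge 0$:
\[
\Dmat^\sigma(\graph')CV(A_\sigma x)\le K_\sigma\Dmat^\sigma(\graph)V(A_\sigma x)\le K_\sigma\Smat^\sigma(\graph)V(x)\le \Smat^\sigma(\graph')CV(x),
\]
which is exactly \eqref{eq:feasVLFC} for $U=CV$ and $\graph'$. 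Hence $U\sim\pclf(\graph',\mset)$, and since the same $C$ works for every $\mset$, we conclude $\graph\leq_\Sigma\graph'$.

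For (a)$\Rightarrow$(b), the subtlety is that $\graph\leq_\Sigma\graph'$ only postulates the \emph{existence} of a $C$ that produces a PCLF for $\graph'$ out of every PCLF for $\graph$; it does not hand us the certificates $K_\sigma$, and a priori the defining property of $\leq_\Sigma$ quantifies over all systems. The idea is to instantiate the quantifier over $\mset$ cleverly so that the functional inequality ``$U_{s'}(A_\sigma x)\le$ [the relevant max] for all $x$'' forces a \emph{matrix} inequality that is amenable to a Farkas/linear-programming duality argument. Concretely, fix the $C$ whose existence is guaranteed by (a). The claim is that for this $C$, for each label $\sigma$, the implication ``$\Dmat^\sigma(\graph)W\le\Smat^\sigma(\graph)Z$ $\Rightarrow$ $\Dmat^\sigma(\graph')CW\le\Smat^\sigma(\graph')CZ$'' must hold for all \emph{nonnegative vectors} $W,Z\in\reels^{|S|}_{\ge 0}$ — because one can realize $W=V(A_\sigma x)$, $Z=V(x)$ for a suitable choice of quadratic pieces and a suitable $x$ (for instance building $\mset$ and $V$ so that these two vectors of evaluations are essentially arbitrary nonnegative vectors; this uses that the $V_s$ are free positive definite quadratics and that we are free to choose the matrices $A_\sigma$ and the point $x$). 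Once this ``all nonnegative $W,Z$'' implication is established, each row of the conclusion is a single scalar inequality that is a consequence of the system of scalar inequalities in the hypothesis together with the sign constraints $W,Z\ge 0$; by the affine form of Farkas' lemma (or LP duality), each such consequence is a nonnegative combination of the hypothesis inequalities and the nonnegativity constraints. Collecting these multipliers row by row produces precisely a nonnegative matrix $K_\sigma$ with $\Dmat^\sigma(\graph')C\le K_\sigma\Dmat^\sigma(\graph)$ (absorbing any slack from the $W\ge0$ constraints, which only helps) and $\Smat^\sigma(\graph')C\ge K_\sigma\Smat^\sigma(\graph)$ (absorbing slack from $Z\ge 0$); the signs work out because $\Dmat$-terms appear on the ``$\le$'' side and $\Smat$-terms on the ``$\ge$'' side of the hypothesis. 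This yields \eqref{eq:theLP-S}–\eqref{eq:theLP-D}, and \eqref{eq:conics} holds by the choice of $C$.

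The step I expect to be the main obstacle is the \emph{genericity} argument in (a)$\Rightarrow$(b): showing that the functional/uniform-over-all-systems hypothesis $\graph\leq_\Sigma\graph'$ actually forces the finite-dimensional implication ``for all nonnegative $W$, $Z$, $\Dmat^\sigma(\graph)W\le\Smat^\sigma(\graph)Z \Rightarrow \Dmat^\sigma(\graph')CW\le\Smat^\sigma(\graph')CZ$.'' One has to exhibit, for a prescribed pair of target nonnegative vectors $(W,Z)$ satisfying the hypothesis, an honest set of positive definite quadratic pieces $\{V_s\}$, an honest set of matrices $\{A_\sigma\}$, and a point $x$ such that $V_s(x)=Z_s$ and $V_s(A_\sigma x)=W_s$ simultaneously for all $s$ — and one must be careful that the PCLF property for $\graph$ of this constructed $V$ is guaranteed by exactly the hypothesis inequalities on $(W,Z)$ for \emph{that one label} $\sigma$ (the inequalities for the other labels and other points can be arranged to be slack, e.g.\ by scaling). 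A clean way to do this is to work one label at a time with diagonal (or near-diagonal) quadratic forms and a single well-chosen direction $x$, possibly after a perturbation/limiting argument to handle boundary cases where some $Z_s$ vanish. Once the finite-dimensional implication is in hand, the passage to the multipliers $K_\sigma$ is a routine invocation of Farkas' lemma applied row by row, and assembling $K_\sigma$ and reading off the sign pattern is bookkeeping.
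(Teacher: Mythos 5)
Your proposal is correct and takes essentially the same route as the paper's proof: the direction (b)$\Rightarrow$(a) is the same chaining $\Dmat^\sigma(\graph')CV(A_\sigma x)\leq K_\sigma\Dmat^\sigma(\graph)V(A_\sigma x)\leq K_\sigma\Smat^\sigma(\graph)V(x)\leq \Smat^\sigma(\graph')CV(x)$, and for (a)$\Rightarrow$(b) your ``genericity'' step (realizing essentially arbitrary nonnegative evaluation vectors with diagonal quadratics, one label at a time, with the other labels made slack and boundary cases handled by a limiting argument) is exactly what the paper does in Lemma~\ref{lemma:posishort} and Propositions~\ref{prop:OrdToPointwise}--\ref{Prop:OrdToNonNegative}, before invoking Farkas' lemma with the same sign pattern to extract the multipliers $K_\sigma$.
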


\begin{example}
\label{ex:SmatDmat}
In this example, we apply Theorem \ref{thm:comparisonLP} to show that $\graph_1 \leq_{\Sigma} \graph_2$, where $\graph_1$ is represented in Figure \ref{fig:PCExample_1} and $\graph_2$ is represented in Figure \ref{fig:PCExample_2}.\\
We first construct the matrices $\Smat^\sigma$, and $\Dmat^\sigma$ for the graphs $\graph_1$ and $\graph_2$ and $\sigma \in \alphabet{2}$. 
In order to do so, we need to set a convention for ordering nodes and edges in a graph (to convene to which edge and node corresponds the entry $\Smat^\sigma_{1,1}(\graph_2)$ for example).
In any graph represented in this paper, we let the first node is the node marked with ``$a_g$'', the second node is that marked with ``$b_g$'', etc..., where the subscript $g$ designates the graph itself.
We use a lexicographical ordering for the edges, and sort them according to their source node first, then their destination node, and finally their labels. For example, the first edge in $\graph_4$ in Figure \ref{fig:PCExample_4} is the edge $(a_4, b_4, 1)$, and its third edge would be the edge $(b_4, c_4, 1)$.\\
With these conventions, we obtain the following matrices:
$$
\Smat^1(\graph_1) = \begin{pmatrix}
1 & 0 \\
1 & 0
\end{pmatrix}, \,
\Dmat^1(\graph_1) = \begin{pmatrix}
1 & 0 \\
0 & 1  
\end{pmatrix},
$$
$$
\Smat^2(\graph_1) = \begin{pmatrix}
0 & 1 \\
0 & 1
\end{pmatrix}, \,
\Dmat^2(\graph_1) = \begin{pmatrix}
1 & 0 \\
0 & 1  
\end{pmatrix},
$$
$$
\Smat^1(\graph_2) = \begin{pmatrix}
1 & 0&  0 \\
1 & 0 & 0\\
0& 1 & 0 
\end{pmatrix}, \,
\Dmat^1(\graph_2) = \begin{pmatrix}
0 & 1&  0 \\
0 & 0 & 1\\
1& 0 & 0 
\end{pmatrix},
$$
$$
\Smat^2(\graph_2) = \begin{pmatrix}
1 & 0&  0 \\
1 & 0 & 0\\
0& 0 & 1 
\end{pmatrix}, \,
\Dmat^2(\graph_2) = \begin{pmatrix}
0 & 1&  0 \\
0 & 0 & 1\\
1& 0 & 0 
\end{pmatrix}.
$$

For these choices, a solution to the inequalities \eqref{eq:theLP-S}, \eqref{eq:theLP-D} is given by
$$ C = \begin{pmatrix}
1 &1& 0 \\
1& 0 & 1
\end{pmatrix},  
K_1 = K_2 =  \begin{pmatrix}
1 & 0& 1\\
0 & 1 &1
\end{pmatrix}.
$$

\end{example}

\subsection{From algebraic to linear inequalities}
\label{proofOfMainThm}
The main challenge for the proof of Theorem \ref{thm:comparisonLP} is to show that $a) \Rightarrow b)$. With our matrix/vector notations, 
Definition \ref{def:conicOrdering} can be written as follows: given two graphs $\graph = (S,E)$ and $\graph' = (S', E')$,
$ \graph  \leq_\Sigma \graph'  $ if there is a matrix $C \in \reels^{|S'|\times |S|}_{\geq 0}$ satisfying \eqref{eq:conics} such that
$$ \forall \mset, \forall V \sim \pclf(\graph, \mset): U = C V \sim \pclf(\graph', \mset).$$

In this subsection, we show that these algebraic conditions are equivalent to the \emph{inclusion of one polyhedral cone into another}. We begin by investigating the range of values that may be taken by a VLFC $V : \reels^n \rightarrow \reels^{|S|}_{\geq 0}$, independently of the dimension $n$.
\begin{lemma}
Take any pair of integers $M, N \geq 1$ and any \emph{positive} vector $\lambda \in \reels^{(M+1)N}_{> 0}$. There exists a point $u \in \reels^{M+1}$,
 a set of matrices $\mathcal{T} = \{T_1, \ldots, T_M\}$ in  $\reels^{M+1 \times M+1}$, and a VLFC 
$U : \reels^{M+1} \rightarrow \reels^{N}_{\geq 0}$ such that 
\begin{equation}
\lambda = \begin{pmatrix}
\lambda^0 \\
\lambda^1 \\
\vdots \\
\lambda^M
\end{pmatrix} =  \begin{pmatrix} \U(u) \\ \U(T_1u) \\ \vdots \\ \U(T_M u)\end{pmatrix},
\label{eq:lambdaAsAVLFC}
\end{equation}
where for $0 \leq i \leq M$, the block $\lambda^i$ is a positive vector of dimension $N$.\\
If furthermore for a graph $\graph = (S,E)$ with $|S| = N$, the vector $\lambda$ satisfies
\begin{equation}
\forall \sigma \in \alphabet{M}: \Smat^\sigma(\graph)\lambda^0 \geq  \Dmat^\sigma(\graph)\lambda^\sigma,
 \label{eq:LambdaIsHAppy.}
\end{equation} 
then we can pick the VLFC $U$ and the matrices $\mathcal{T}$  such that $U \sim \pclf(\graph, \mathcal{T})$. 
\label{lemma:posishort}
\end{lemma}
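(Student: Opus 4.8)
The plan is to construct an explicit realization of the vector $\lambda$ as the values of a VLFC at a point and its images under a family of matrices, and then check feasibility of the Lyapunov inequalities. The key observation is that we have complete freedom in choosing the ambient dimension (it will be $M+1$), the point $u$, and the matrices $T_1,\dots,T_M$, so the problem is one of \emph{interpolation}: I want quadratic forms whose values at $M+1$ specified points are prescribed positive numbers.

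\textbf{Step 1: Choice of the point and the matrices.} I would take $u = e_0$ and $T_i u = e_i$ for $i = 1,\dots,M$, where $\{e_0,\dots,e_M\}$ is the standard basis of $\reels^{M+1}$. The matrix $T_i$ can simply be the matrix sending $e_0 \mapsto e_i$ (and, say, everything else to $0$, or to anything convenient — only the action on $e_0$ matters for \eqref{eq:lambdaAsAVLFC}, but the action on the other basis vectors will matter for the Lyapunov inequalities in the second part, so I would keep that freedom open). Thus the $M+1$ relevant vectors $u, T_1 u, \dots, T_M u$ are exactly the $M+1$ basis vectors, which are in "general position" in the strong sense that any assignment of values to them can be realized by a diagonal quadratic form.

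\textbf{Step 2: Interpolating the quadratics.} For each node index $j \in \alphabet{N}$, I need a positive definite quadratic form $U_j(x) = x^\top Q_j x$ with $U_j(e_i) = \lambda^i_j$ for $i = 0,\dots,M$. Taking $Q_j$ diagonal with entries $(Q_j)_{ii} = \lambda^i_j$ does exactly this, and since every $\lambda^i_j > 0$ by hypothesis, each $Q_j \succ 0$, so each $U_j$ is a genuine positive definite quadratic function and $U = (U_1,\dots,U_N)$ is a bona fide VLFC. This establishes \eqref{eq:lambdaAsAVLFC}.

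\textbf{Step 3: Arranging the Lyapunov inequalities.} This is where the extra hypothesis \eqref{eq:LambdaIsHAppy.} and the extra freedom in the off-diagonal action of the $T_\sigma$ come in, and I expect this to be the main obstacle. An edge $(s,d,\sigma) \in E$ requires $U_d(T_\sigma x) \le U_s(x)$ for \emph{all} $x \in \reels^{M+1}$, not just at $x = e_0$; at $x = e_0$ the inequality reads $\lambda^\sigma_d \le \lambda^0_s$, and \eqref{eq:LambdaIsHAppy.} guarantees that for each $\sigma$ and each edge-label class these pointwise constraints are \emph{jointly} satisfiable in the sense encoded by $\Smat^\sigma,\Dmat^\sigma$. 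The trick is to make the quadratic inequality hold globally by designing $T_\sigma$ to "contract hard" in every direction orthogonal to $e_0$: concretely, scale $T_\sigma$ so that $T_\sigma x$ has a tiny component outside $\spans\{e_0\}$ whenever $x \notin \spans\{e_0\}$, or more cleanly, define $T_\sigma$ to be rank one, $T_\sigma = e_\sigma v^\top$ for a suitable covector $v$ with $v^\top e_0 = 1$; then $U_d(T_\sigma x) = (v^\top x)^2 (Q_d)_{\sigma\sigma} = (v^\top x)^2 \lambda^\sigma_d$, and I need $(v^\top x)^2 \lambda^\sigma_d \le x^\top Q_s x$ for all $x$. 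By picking $v$ so that $(v^\top x)^2 \le \epsilon \|x\|^2$ off the $e_0$-axis is not possible for a fixed nonzero $v$, so instead I would exploit that $Q_s$ is diagonal with a controllable entry at position $0$ and use that $x^\top Q_s x \ge \lambda^0_s (x^\top e_0)^2$; this forces me to choose $v = e_0^\top$ exactly, giving $T_\sigma = e_\sigma e_0^\top$, and then the inequality becomes $\lambda^\sigma_d (x_0)^2 \le x^\top Q_s x$, which holds for all $x$ iff $\lambda^\sigma_d \le (Q_s)_{00} = \lambda^0_s$ — precisely the content of \eqref{eq:LambdaIsHAppy.} unpacked edge-by-edge. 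So the rank-one choice $T_\sigma = e_\sigma e_0^\top$ does the job, converting the pointwise-at-$e_0$ inequalities into global quadratic inequalities.

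\textbf{Step 4: Assembling.} I would then verify that $U \sim \pclf(\graph,\mathcal{T})$ by checking \eqref{eq:feasVLFC}: for each $\sigma$, $\Dmat^\sigma(\graph)U(T_\sigma x) \le \Smat^\sigma(\graph)U(x)$ reduces, with $T_\sigma = e_\sigma e_0^\top$ and the diagonal $Q_j$'s, to $x_0^2\, \Dmat^\sigma(\graph)\lambda^\sigma \le \Smat^\sigma(\graph) (\text{vector with entries } x^\top Q_s x)$, and since $x^\top Q_s x \ge \lambda^0_s x_0^2$ entrywise, this follows from \eqref{eq:LambdaIsHAppy.}. The first part of the lemma (realizing $\lambda$ without the graph hypothesis) is the special case where we ignore the inequality constraints and only need Steps 1–2.

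\textbf{Anticipated main difficulty.} The delicate point is not the interpolation — that is routine with diagonal forms — but ensuring the Lyapunov inequalities hold \emph{globally in $x$} rather than merely at the single point $e_0$. The rank-one choice of $T_\sigma$ is what makes this work, because it collapses the value $U_d(T_\sigma x)$ onto a single squared linear functional of $x$ that is dominated by the $e_0$-aligned part of the diagonal form $U_s$; I would double-check that this choice is simultaneously consistent with \eqref{eq:lambdaAsAVLFC} (it is, since $T_\sigma e_0 = e_\sigma$) and does not accidentally violate positive definiteness requirements anywhere (it does not, since the $Q_j$ are chosen independently of the $T_\sigma$). If one instead tried invertible $T_\sigma$, the off-axis behavior of $U_d \circ T_\sigma$ would be uncontrolled and the global inequality could fail, so the rank-deficiency is essential.
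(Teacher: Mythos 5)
Your proposal is correct and is essentially the paper's own construction: diagonal quadratic forms $U_k(x)=\sum_i \lambda^i_k x_{i+1}^2$, the point $u$ equal to the first basis vector, and the rank-one maps $T_\sigma = e_{\sigma+1}e_1^\top$ (your $e_\sigma e_0^\top$), with the same edge-by-edge verification that \eqref{eq:LambdaIsHAppy.} yields $U_d(T_\sigma x)=\lambda^\sigma_d x_1^2 \le \lambda^0_s x_1^2 \le U_s(x)$. Only your exploratory detour about general covectors $v$ and the closing claim that rank-deficiency is ``essential'' are extraneous; the core argument matches the paper.
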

\begin{proof}
We begin with the first part of the Lemma.
For all $k \in \{1, \ldots, N\}$ we define the quadratic function $U_k : \reels^{M+1} \rightarrow \reels_{\geq 0}$ as follows:
\begin{equation}
 U_k(x) = \sum_{i = 0}^M \lambda^i_k x_{i+1}^2. 
 \label{eq:UVLFC}
\end{equation}
We now take the vector $u \in \reels^{M+1}$ such that $u_1 = 1$ and $u_i = 0$ for $i \geq 2$. Clearly, $U_k(u) = \lambda^0_k$, so we have $U(u) = \lambda^0$ by definition. \\
For the matrices $\mathcal{T}$, we take for each $\sigma \in \alphabet{M}$ the matrix $T_{\sigma}$ defined as follows:
\begin{equation}
 \forall 1\leq i,j \leq M+1, T_{\sigma, {i,j}} = \delta_{i,\sigma + 1}\delta_{j,1},
 \label{eq:TransitionMaps}
 \end{equation}
 where for $1 \leq k,\ell \leq M+1$, $\delta_{k,\ell} = 1$ if $k = \ell$ and $\delta_{k,\ell} = 0$ else.
We can then verify that for all $\sigma \in \alphabet{M}$, $U(T_\sigma u) = \lambda^\sigma.$ This concludes the proof of the first part.

Let us now take a graph $\graph = (S,E)$ with $N$ nodes and $M$ labels, and assume \eqref{eq:LambdaIsHAppy.} holds for a given label $\sigma$.
This means that $\forall (s,d,\sigma) \in E$, $(\lambda^0)_s \geq (\lambda^\sigma)_d.$
Take any  edge $(s,d,\sigma) \in E$, and any $x \in \reels^{(M+1)}$.
Taking again $U$ as in \eqref{eq:UVLFC}, we have
$$
\begin{aligned}
U_s(x) & = \sum_{i = 0}^M\lambda^i_s \, x_{i+1}^2  \\ & \geq  \lambda^0_s \, x_{1}^2 \geq \lambda^\sigma_d \, x_1^2 = U_d(T_\sigma x).
\end{aligned}
 $$
Since this holds at all edges of the graph with label $\sigma$, and all $x \in \reels^{(M+1)}$, we can conclude the proof.
\end{proof}

\begin{proposition}\label{prop:OrdToPointwise}
Consider two graphs $\graph=(S,E)$ and $\graph'=(S',E')$ on $M$ labels and a matrix $C\in\reels_{\geq 0}^{|S'|\times |S|}$ satisfying~\eqref{eq:conics}.
The following statements are equivalent.
\begin{itemize}
\item [a)] The graphs satisfy $\graph \leq_\Sigma \graph'$ with the matrix $C$.
\item [b)] For any set of $M$ matrices $\mset = \{A_1, \ldots, A_M\}$ in any dimension $n$, for any VLFC $V: \reels^n \rightarrow \reels^{|S|}_{ \geq 0},$ for any $\sigma \in \alphabet{M}$ and any $x \in \reels^n$:
\begin{align}
&\Dmat^\sigma(\graph)V(A_\sigma x) \leq \Smat^\sigma(\graph)V(x) \label{eq:WorksForXandG} \Rightarrow \\
&\Dmat^\sigma(\graph')CV(A_\sigma x) \leq \Smat^\sigma(\graph')C V(x). \label{eq:WorksForXandG'} 
\end{align}
\end{itemize}
\end{proposition}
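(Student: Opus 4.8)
The plan is to prove the two implications separately; the content is entirely in $a) \Rightarrow b)$.

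The direction $b) \Rightarrow a)$ is immediate. Suppose $b)$ holds, take any $\mset$ of dimension $n$ and any $V \sim \pclf(\graph,\mset)$. By the characterization \eqref{eq:feasVLFC}, the hypothesis \eqref{eq:WorksForXandG} is valid for \emph{every} $\sigma \in \alphabet{M}$ and every $x \in \reels^n$, so $b)$ yields \eqref{eq:WorksForXandG'} for every $\sigma$ and $x$. Since \eqref{eq:conics} forces each component $\sum_{s} C_{s',s} V_s$ to be a positive definite quadratic, $CV$ is a legitimate VLFC, and applying \eqref{eq:feasVLFC} to $\graph'$ and $CV$ gives exactly $CV \sim \pclf(\graph',\mset)$. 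Hence $\graph \leq_\Sigma \graph'$ with the matrix $C$.

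For $a) \Rightarrow b)$ I would fix $\mset$ in dimension $n$, a VLFC $V$, a label $\sigma^* \in \alphabet{M}$ and a point $x^* \in \reels^n$ satisfying \eqref{eq:WorksForXandG}, and aim to deduce \eqref{eq:WorksForXandG'} at the same $\sigma^*, x^*$. First I would dispose of the degenerate cases: since each $V_s$ is positive definite quadratic, $V(y)$ is either zero (when $y=0$) or entrywise strictly positive. If $x^*=0$ then both sides of \eqref{eq:WorksForXandG'} vanish; if $x^*\neq 0$ but $A_{\sigma^*}x^*=0$ then the left side vanishes while the right side is nonnegative because $C$ and $\Smat^{\sigma^*}(\graph')$ have nonnegative entries and $V(x^*)\geq 0$. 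So I may assume $\lambda^0 := V(x^*)$ and $\lambda^{\sigma^*} := V(A_{\sigma^*}x^*)$ are both strictly positive. The key step is then to use Lemma~\ref{lemma:posishort} to ``localize'' this single data point into a genuine PCLF instance. I would assemble a strictly positive vector $\lambda \in \reels^{(M+1)|S|}_{>0}$ whose blocks are $\lambda^0$ and $\lambda^{\sigma^*}$ as above, and $\lambda^\sigma := \varepsilon \mathbf{1}$ for the remaining labels, with $\varepsilon \in (0, \min_s \lambda^0_s]$. Because each row of $\Smat^\sigma(\graph)$ and of $\Dmat^\sigma(\graph)$ contains a single nonzero entry (the source, resp. the destination, of the corresponding labelled edge), condition \eqref{eq:LambdaIsHAppy.} reads $\lambda^0_s \geq \lambda^\sigma_d$ for every edge $(s,d,\sigma)\in E$: for $\sigma = \sigma^*$ this is precisely \eqref{eq:WorksForXandG}, and for $\sigma \neq \sigma^*$ it holds by the choice of $\varepsilon$. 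Lemma~\ref{lemma:posishort} then produces $u \in \reels^{M+1}$, matrices $\mathcal{T}=\{T_1,\dots,T_M\}$ and a VLFC $U$ with $U \sim \pclf(\graph,\mathcal{T})$, $U(u) = \lambda^0 = V(x^*)$ and $U(T_{\sigma^*}u) = \lambda^{\sigma^*} = V(A_{\sigma^*}x^*)$. Now hypothesis $a)$ applies to the auxiliary system $\mathcal{T}$, giving $CU \sim \pclf(\graph',\mathcal{T})$; reading off the Lyapunov inequalities of $\graph'$ for the label $\sigma^*$ at the point $u$, and using $(CU)(y) = C\,U(y)$, yields $\Dmat^{\sigma^*}(\graph')CV(A_{\sigma^*}x^*) \leq \Smat^{\sigma^*}(\graph')CV(x^*)$, which is \eqref{eq:WorksForXandG'}.

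The main obstacle is conceptual rather than computational: $a)$ only says something once \emph{all} Lyapunov inequalities of $\graph$ hold simultaneously, whereas $b)$ must be extracted from the value of $V$ at a single pair $(x^*, A_{\sigma^*}x^*)$. Lemma~\ref{lemma:posishort} is exactly the bridge, realizing the finitely many relevant function values — together with harmless small ``dummy'' values for the other labels — as a genuine Path-Complete Lyapunov function of a small switching system whose dimension does not depend on $n$. Once this is in place, the rest is only the positivity bookkeeping in the degenerate cases and matching of indices.
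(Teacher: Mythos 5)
Your proof is correct and follows essentially the same route as the paper: the easy direction via \eqref{eq:feasVLFC}, and for $a)\Rightarrow b)$ the localization of the single data point $\bigl(V(x^*),V(A_{\sigma^*}x^*)\bigr)$ into a genuine PCLF instance through Lemma~\ref{lemma:posishort}, padding the remaining labels with a constant block (the paper takes $(\min_i \lambda^0_i)\mathbf{1}$ where you take $\varepsilon\mathbf{1}$ with $\varepsilon\le\min_s\lambda^0_s$). Your explicit handling of the degenerate cases $x^*=0$ and $A_{\sigma^*}x^*=0$ simply makes precise the paper's ``without loss of generality, $\lambda>0$'' step.
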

\begin{proof}
The difference between the two statements is subtle yet important. Statement $a)$ is equivalent to \emph{``if \eqref{eq:WorksForXandG} holds for all $\sigma \in \alphabet{M}$ and all $x \in \reels^n$, then so does \eqref{eq:WorksForXandG'}"}. In contrast, statement $b)$ merely states that \eqref{eq:WorksForXandG'} holds at any point and label where \eqref{eq:WorksForXandG} holds.\\
With this in mind, proving $b) \Rightarrow a)$ is immediate as, under $b)$, if \eqref{eq:WorksForXandG} holds for all points $x \in \reels^n$ and all $\sigma \in \alphabet{M}$, so does \eqref{eq:WorksForXandG'}. \\
We prove that $a) \Rightarrow b)$ by using the construction in the proof of Lemma \ref{lemma:posishort}. Assume that $a)$ holds and consider a set of matrices $\mset$, a VLFC $V$, and one particular point $x^*$ such that \eqref{eq:WorksForXandG} holds at $x^*$. We show that \eqref{eq:WorksForXandG'} holds at $x^*$ as well. Let 
$$ \lambda = \begin{pmatrix} \lambda^0 \\ \lambda^1 \\ \vdots \\ \lambda^M \end{pmatrix} = \begin{pmatrix} V(x^*) \\ V(A_1 x^*) \\ \vdots \\ V(A_M x^*)\end{pmatrix}. $$
Without loss of generality, we can assume $\lambda$ to be strictly positive. For this vector, we know that $\Smat^\sigma(\graph) \lambda^0 \geq \Dmat^\sigma(\graph)\lambda^\sigma$, or more explicitly, $$ \forall (s,d,\sigma) \in E, \, (\lambda^0)_s \geq (\lambda^\sigma)_d.$$ Using this information, we seek to show that 
\begin{equation}
\Smat^\sigma(\graph')C \lambda^0 \geq \Dmat^\sigma(\graph')C\lambda^\sigma
\label{eq:SDLambdaStuff}
\end{equation}
 holds as well.

We now construct a vector $\tilde{\lambda} \in \reels^{N(M+1)} $ defined in blocks such that $\tilde{\lambda}^0 = \lambda^0$, $\tilde{\lambda}^\sigma = \lambda^\sigma$, and for all $\sigma' \neq \sigma$, $\tilde{\lambda}^{\sigma'} = (\min_i \lambda_i^0)\mathbf{1}$, where  $\mathbf{1} \in \reels^{|S|}$ is the vector of all ones.
By construction, \eqref{eq:LambdaIsHAppy.} holds for the vector $\tilde{\lambda}$. 
Using Lemma \ref{lemma:posishort}, we take a set of matrices $\mathcal{T} = \{T_{\sigma'}, \sigma' \in \alphabet{M}\}$ of dimension $M+1$ and a VLFC $U : \reels^{M+1} \rightarrow \reels^{|S|}_{\geq 0}$ such that there is a point $u^* \in \reels^{M+1}$ where
$$ U(u^*) = \tilde{\lambda}^0, \, \forall \sigma' \in \alphabet{M}: U(T_{\sigma'} u^*) = \tilde{\lambda}^{\sigma'}.$$   Moreover it must be that $U \sim \pclf(\graph, \mathcal{T})$ since $\eqref{eq:LambdaIsHAppy.}$ holds \emph{at all ${\sigma'} \in \alphabet{M}$}.

We can now conclude the proof using statement $a)$: since $U \sim \pclf(\graph, \mathbf{T})$, \eqref{eq:WorksForXandG} holds for $U$, $\mathcal{T}$, and all $u \in \reels^{M+1}$. Hence, so does \eqref{eq:WorksForXandG'}. In particular, it holds for the mode $\sigma$ at the point $u^*$, and we can conclude the proof since
$\Smat^\sigma(\graph')C \lambda^0 \geq \Dmat^\sigma(\graph')C\lambda^\sigma $
holds true.
\end{proof}

The next result allows us to characterize the $\graph \leq_\Sigma \graph'$ property without having to rely on concepts explicitly related to dynamics (e.g. matrix sets and VLFCs). 
\begin{proposition}\label{Prop:OrdToNonNegative}
Consider two graphs $\graph=(S,E)$ and $\graph'=(S',E')$ on $M$ labels and a matrix $C\in\reels_{\geq 0}^{|S'|\times |S|}$ satisfying~\eqref{eq:conics}.
The following statements are equivalent.
\begin{itemize}
\item [a)] The graphs satisfy $\graph \leq_\Sigma \graph'$ with the matrix $C$.
\item [b)] For any vector 
$$\lambda = \begin{pmatrix} (\lambda^0)^\top & (\lambda^1)^\top & \ldots & (\lambda^M)^\top \end{pmatrix}^\top \in \reels^{(M+1)|S|}_{\geq 0} $$
with $\lambda^i \in \reels^{|S|}_{\geq 0}$, for any $\sigma \in \alphabet{M}$:
\begin{align}
&\Dmat^\sigma(\graph)\lambda^\sigma \leq \Smat^\sigma(\graph)\lambda^0 \label{eq:WorksForXandGL} \Rightarrow \\
&\Dmat^\sigma(\graph')C\lambda^\sigma \leq \Smat^\sigma(\graph')C \lambda^0. \label{eq:WorksForXandG'L} 
\end{align}
\end{itemize}
\end{proposition}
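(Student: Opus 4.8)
The plan is to deduce this proposition from Proposition~\ref{prop:OrdToPointwise}, which already characterizes $\graph \leq_\Sigma \graph'$ (with the given matrix $C$) as a \emph{pointwise} implication: for every matrix set, every VLFC $V$, every label $\sigma$ and every point $x$, the $\graph$-inequality \eqref{eq:WorksForXandG} at $(\sigma,x)$ forces the $\graph'$-inequality \eqref{eq:WorksForXandG'} at $(\sigma,x)$. The only new content here is that the pairs of vectors occurring in that implication --- namely $\bigl(V(x),V(A_\sigma x)\bigr)$ --- may be replaced by \emph{arbitrary} nonnegative vectors $(\lambda^0,\lambda^\sigma)$; once this is established, statement $b)$ has become a purely linear-algebraic condition (an inclusion of polyhedral cones), which is the form needed in the rest of Subsection~\ref{proofOfMainThm}. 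I would prove both implications: $b)\Rightarrow a)$ is essentially immediate, while $a)\Rightarrow b)$ rests on the realizability construction of Lemma~\ref{lemma:posishort} combined with a limiting argument.

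For $b)\Rightarrow a)$: given any matrix set $\mset$, VLFC $V$, label $\sigma$ and point $x$ for which \eqref{eq:WorksForXandG} holds, I would set $\lambda^0:=V(x)$ and $\lambda^\sigma:=V(A_\sigma x)$ (the other blocks of $\lambda$ play no role, take them to be $0$). These are nonnegative because a VLFC is, \eqref{eq:WorksForXandGL} is then literally \eqref{eq:WorksForXandG}, so $b)$ yields \eqref{eq:WorksForXandG'L}, i.e. \eqref{eq:WorksForXandG'}. By Proposition~\ref{prop:OrdToPointwise} this gives $a)$.

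For $a)\Rightarrow b)$: fix a label $\sigma$ and nonnegative $\lambda^0,\lambda^\sigma\in\reels^{|S|}_{\geq 0}$ with $\Dmat^\sigma(\graph)\lambda^\sigma\leq\Smat^\sigma(\graph)\lambda^0$; the goal is $\Dmat^\sigma(\graph')C\lambda^\sigma\leq\Smat^\sigma(\graph')C\lambda^0$. For $\varepsilon>0$ put $\lambda^0_\varepsilon:=\lambda^0+\varepsilon\mathbf 1$ and $\lambda^\sigma_\varepsilon:=\lambda^\sigma+\varepsilon\mathbf 1$, which are now strictly positive. I would assemble a strictly positive vector $\tilde\lambda\in\reels^{(M+1)|S|}_{>0}$ whose block $0$ is $\lambda^0_\varepsilon$, whose block $\sigma$ is $\lambda^\sigma_\varepsilon$, and whose remaining blocks are $\mathbf 1$, and invoke Lemma~\ref{lemma:posishort} to obtain a point $u$, matrices $\mathcal T=\{T_1,\dots,T_M\}$ and a VLFC $U$ with $U(u)=\lambda^0_\varepsilon$ and $U(T_\sigma u)=\lambda^\sigma_\varepsilon$. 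Since each row of $\Smat^\sigma(\graph)$ and of $\Dmat^\sigma(\graph)$ contains exactly one entry equal to $1$, one has $\Smat^\sigma(\graph)\mathbf 1=\Dmat^\sigma(\graph)\mathbf 1=\mathbf 1$, so adding $\varepsilon\mathbf 1$ to both sides of the hypothesis preserves it: $\Dmat^\sigma(\graph)\lambda^\sigma_\varepsilon\leq\Smat^\sigma(\graph)\lambda^0_\varepsilon$, which is precisely \eqref{eq:WorksForXandG} for $U$, $\mathcal T$ at the point $u$ and label $\sigma$. By $a)$ and Proposition~\ref{prop:OrdToPointwise}, \eqref{eq:WorksForXandG'} holds there as well, i.e. $\Dmat^\sigma(\graph')C\lambda^\sigma_\varepsilon\leq\Smat^\sigma(\graph')C\lambda^0_\varepsilon$; letting $\varepsilon\to0^+$ and using continuity of the linear maps involved gives the desired inequality.

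I expect the only real subtlety --- the main obstacle --- to be exactly this last step: Lemma~\ref{lemma:posishort} produces a genuine (positive-definite) VLFC only for \emph{strictly} positive target vectors, so nonnegative $\lambda$ with vanishing entries cannot be realized directly and must be reached through strictly positive perturbations. This is harmless because the inequality to be proved is closed, but it forces one to verify that the perturbation $+\varepsilon\mathbf 1$ keeps the vector inside the constraint set, which is where the elementary observation that $\Smat^\sigma(\graph)$ and $\Dmat^\sigma(\graph)$ have all row sums equal to $1$ comes in. Everything else is just translation between the successive equivalent formulations of $\graph\leq_\Sigma\graph'$ already at hand.
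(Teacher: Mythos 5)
Your proof is correct and follows essentially the same route as the paper: both directions are reduced to Proposition~\ref{prop:OrdToPointwise}, and the implication $a)\Rightarrow b)$ uses Lemma~\ref{lemma:posishort} to realize strictly positive vectors as values of a VLFC. The only difference is cosmetic: where the paper passes from strictly positive to nonnegative vectors via a terse cone-closure remark, you make the same step explicit through the $\varepsilon\mathbf{1}$ perturbation and the observation that $\Smat^\sigma$ and $\Dmat^\sigma$ have unit row sums, which is a welcome clarification rather than a new argument.
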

\begin{proof}
By Proposition \ref{prop:OrdToPointwise}, we can as well prove the equivalence between the statement $b)$ above and the statement of  Proposition \ref{prop:OrdToPointwise}-b).\\
With this in mind, \emph{$b) \Rightarrow a)$} is direct. \\
For the other direction, i.e. $a) \Rightarrow b)$, we therefore need to show that Proposition \ref{prop:OrdToPointwise} - $b)$ implies the current statement $b)$. 
By Lemma \ref{lemma:posishort}, we have that any \emph{positive vector} $\lambda > 0$ satisfying \eqref{eq:WorksForXandGL} also satisfies \eqref{eq:WorksForXandG'L}. Indeed, given any such vector $\lambda$, we can construct a point $u$, a set of matrices $\mathcal{T}$ and a VLFC $U$ such that these satisfy \eqref{eq:WorksForXandG}. They must therefore satisfy \eqref{eq:WorksForXandG'}, and therefore $\lambda$ satisfies \eqref{eq:WorksForXandG'L}.\\
Finally, observe that this implies that the open polyhedral cone defined as the set $P_\sigma = \{\lambda > 0: \eqref{eq:WorksForXandGL} \text{ holds} \}  $ is included into the cone
$P_\sigma' = \{\lambda \geq 0: \eqref{eq:WorksForXandG'L} \text{ holds} \}. $ Hence the closure of  $P_\sigma$ is also in $P_\sigma'$, which is equivalent to $b)$ and concludes the proof.
\end{proof}

We are now in position to prove Theorem \ref{thm:comparisonLP}.
\begin{proof}[Proof of Theorem \ref{thm:comparisonLP}]
\emph{$b) \Rightarrow a)$:}
Given a set of $M$ matrices $\mset$ of dimension $n$, assume there is a VLFC  $V \sim \pclf(\graph, \mset)$. By definition, we have $\forall \sigma \in \alphabet{M}, \forall \, x \in \reels^n$:  $\Dmat^\sigma(\graph) V(A_\sigma x) \leq \Smat^\sigma(\graph)  V(x)$.  This implies, from \eqref{eq:theLP-S} and \eqref{eq:theLP-D}, that $\forall \sigma \in \alphabet{M}, \, \forall x \in \reels^n$:  $\Dmat^\sigma(\graph')C V(A_\sigma x) \leq \Smat^\sigma(\graph')C  V(x)$. We conclude that for any set $\mset$ such that there is $V \sim \pclf(\graph, \mset)$, it is true that $U = CV \sim \pclf(\graph', \mset)$. Therefore, $\graph \leq_\Sigma \graph'$.\\
 \emph{$a) \Rightarrow b)$:}
 Take two graphs $\graph = (S,E)$ and $\graph' = (S',E')$ and assume that $\graph \leq_\Sigma \graph'$ through some matrix $C \in \reels^{|S'| \times |S|}_{\geq 0}$ that satisfies \eqref{eq:conics}.
 Then, Proposition \ref{Prop:OrdToNonNegative} - $b)$ states the following. For all $\sigma \in \alphabet{M}$, the polyhedral sets 
\begin{align}
 & P_\sigma = \{x,y \in \reels^{|S|}_{\geq 0} \mid \Smat^\sigma(\graph) x - \Dmat^\sigma(\graph) y \geq 0  \} \label{eq:setP}, \\
 & P'_\sigma = \{x,y \in \reels^{|S|}_{\geq 0} \mid  \Smat^\sigma(\graph')C x - \Dmat^\sigma(\graph')C y \geq 0 \} \label{eq:setP}
\end{align} 
satisfy $P_\sigma \subseteq P'_\sigma$. The inclusion of polyhedral sets is well understood, and we are going to make use of the following formulation of the well-known Farkas Lemma to conclude our proof.
\begin{lemma} [Farkas Lemma, {\cite[Lemma II.2]{HeDTCL}}]
Consider two matrices $A \in \reels^{p \times n}$ and $B \in \reels^{q \times n}$. The following are equivalent:
	$$
	\begin{aligned}	
	 & (\{y \in \reels^{n}: Ay \geq \mathbf{0}, \, y \geq \mathbf{0}\} \subseteq \{y \in \reels^{n}: By \geq \mathbf{0}\}) \\ 
	 &\Leftrightarrow \exists K \in \reels^{m \times p}: KA \leq B, \, K \geq \mathbf{0}. 
	 \end{aligned}$$ 
	\label{lem:farkas}
 \end{lemma}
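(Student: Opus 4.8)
The plan is to establish the two implications separately, reading the stated equivalence as a characterization of the polyhedral-cone inclusion $\{y \geq \mathbf{0} : Ay \geq \mathbf{0}\} \subseteq \{y : By \geq \mathbf{0}\}$ by a nonnegative multiplier matrix. I first note that for $KA \leq B$ to be dimensionally consistent one must read $m = q$, i.e. $K \in \reels^{q \times p}$, so that $KA$ and $B$ are both $q \times n$. The implication $\Leftarrow$ is the elementary one, and I would dispatch it by direct computation: given $K \geq \mathbf{0}$ with $KA \leq B$ and any $y \geq \mathbf{0}$ with $Ay \geq \mathbf{0}$, the entrywise inequality $B - KA \geq \mathbf{0}$ applied to the nonnegative vector $y$ gives $By \geq KAy$, while $K \geq \mathbf{0}$ and $Ay \geq \mathbf{0}$ give $KAy \geq \mathbf{0}$; chaining these yields $By \geq \mathbf{0}$, which is exactly the desired inclusion.

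For the nontrivial direction $\Rightarrow$ I would reduce the matrix statement to a scalar one, row by row. Writing $b_i^\top$ for the $i$-th row of $B$, the assumed inclusion is equivalent to the family of scalar implications $\{y \geq \mathbf{0},\, Ay \geq \mathbf{0}\} \Rightarrow b_i^\top y \geq 0$, one for each $i \in \{1, \ldots, q\}$. Each such implication says precisely that $b_i$ lies in the dual cone of the polyhedral cone $\mathcal{C} = \{y : Ay \geq \mathbf{0},\, y \geq \mathbf{0}\}$. Using the duality of polyhedral cones — the dual of $\{y : My \geq \mathbf{0}\}$ is the finitely generated cone $\{M^\top z : z \geq \mathbf{0}\}$, applied here with $M = \begin{pmatrix} A \\ I \end{pmatrix}$ — this membership is equivalent to the existence of $k_i \geq \mathbf{0}$ and $\mu_i \geq \mathbf{0}$ with $b_i = A^\top k_i + \mu_i$, i.e. $A^\top k_i \leq b_i$. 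Transposing and stacking the vectors $k_i^\top$ as the rows of a matrix $K$ then produces $K \in \reels^{q \times p}_{\geq 0}$ with $KA \leq B$, completing the argument.

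The hard part is the polyhedral-cone duality invoked in the second step, which is the genuine content of the Farkas alternative and where all the work hides. The plan is to obtain it from a separation argument: one shows that a finitely generated cone is closed and convex, so that if $b_i$ failed to lie in $\{A^\top k + \mu : k, \mu \geq \mathbf{0}\}$ there would be a separating hyperplane, i.e. a vector $y$ with $y^\top(A^\top k + \mu) \geq 0$ for all $k, \mu \geq \mathbf{0}$ but $y^\top b_i < 0$; specializing the first condition (take $\mu = \mathbf{0}$, then $k = \mathbf{0}$) forces $Ay \geq \mathbf{0}$ and $y \geq \mathbf{0}$, exhibiting a point of $\mathcal{C}$ that violates $b_i^\top y \geq 0$ and contradicting the hypothesis. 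Since the paper already cites this statement from \cite{HeDTCL}, an acceptable shortcut is to quote the classical Farkas lemma in its standard alternative form and perform only the row-wise bookkeeping above; either way, the sole subtleties to watch are the correct handling of the two blocks of nonnegative multipliers ($k_i$ coming from $Ay \geq \mathbf{0}$ and $\mu_i$ from $y \geq \mathbf{0}$) and the dimensional reading $m = q$.
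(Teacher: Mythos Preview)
Your proof is correct and follows the standard route: the easy direction by direct multiplication, and the substantive direction by row-wise reduction to the classical Farkas alternative, itself obtained via separation from a closed finitely generated cone. Your reading $m = q$ is the only sensible one.

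However, note that the paper does not actually prove this lemma: it is quoted verbatim as a known result from \cite[Lemma II.2]{HeDTCL} and used as a black box inside the proof of Theorem~\ref{thm:comparisonLP}. So there is no ``paper's own proof'' to compare against; you have supplied a correct proof where the paper simply cites one. If anything, your write-up is more self-contained than what the paper provides.
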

 Applying Lemma \ref{lem:farkas} to the inclusion $P_\sigma \subseteq P'_\sigma$, 
 there must be a matrix $K_\sigma \in \reels^{|E^{'}_{\sigma}| \times |E_\sigma|}$, where $E^{'}_{\sigma}$ and $E_{\sigma}$ are the sets of edges with label $\sigma$ in $\graph'$ and $\graph$ respectively, satisfying
\begin{equation}
 K_\sigma \begin{pmatrix}\Smat^\sigma(\graph) & - \Dmat^\sigma(\graph) \end{pmatrix} \leq \begin{pmatrix} \Smat^\sigma(\graph') & - \Dmat^\sigma(\graph') \end{pmatrix} C, 
 \label{eq:theLPinStrealinedForm}
\end{equation}
which concludes the proof.
\end{proof} 

\begin{remark}
The characterization of the ordering $\graph \leq_\Sigma \graph'$ is remarkable in its simplicity. 
In its essence, the linear program of Theorem \ref{thm:comparisonLP} mimics the proof scheme used in Example \ref{example:ordering0}. This is easier to see by inspecting \eqref{eq:theLPinStrealinedForm}. Given a graph $\graph$ on an alphabet $\alphabet{M}$, each row of the matrix $\begin{pmatrix}
\Smat^\sigma(\graph) & - \Dmat^  \sigma(\graph)
\end{pmatrix}$
corresponds to one edge of the graph (with label $\sigma$). The inequality \eqref{eq:theLPinStrealinedForm} is therefore equivalent to checking that, given a matrix $C$ mapping a PCLF for the graph $\graph$ into a PCLF for the graph $\graph'$,
we can prove that all inequalities of the graph $\graph'$ hold true simply by composing those of $\graph$. The matrices $K_\sigma$ give us the information regarding how to compose these inequalities.\\
Note that the tool recovers those from \cite[Section 4]{AnPhPCGC}, which are based on combinatorial criteria, that ultimately rely on constructing a PCLF for one graph from a PCLF for another graph by positive combination of the pieces of the first one.
 \end{remark}
\subsection{Discussion and extensions}
\label{Disc}
The approach can be naturally extended to encompass more general proofs of ordering. 
\begin{example}
Consider the graph $\graph_7$ in Figure \ref{fig:PCExample_7}.

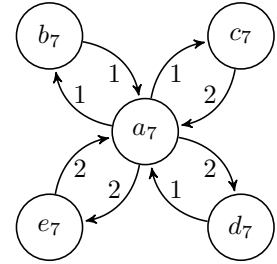
\begin{figure}[!ht]
\centering
\begin{tikzpicture}[->,>=stealth',shorten >=1pt,auto,node distance=1.8cm,
                    semithick, scale = 1, transform shape ]
 \node[state] (a)                          {$a_7$};
  \node[state] (b)  [above left of = a]                        {$b_7$};
   \node[state] (c) [above right of = a]                         {$c_7$};
    \node[state] (d)       [below right of = a]                   {$d_7$};
     \node[state] (e)      [below left of  = a ]                    {$e_7$};
     
  \path (a) edge [bend left = 35] node [above, pos = 0.4] {$1$} (b)
     		edge [bend left = 35] node [below, pos = 0.6] {$1$} (c)
     		edge [bend left = 35] node [below, pos = 0.4] {$2$} (d)
		    edge [bend left = 35] node [above, pos = 0.6] {$2$} (e)
  		(b) edge [bend left = 35] node [below, pos = 0.4]{$1$} (a)
  		(c) edge [bend left = 35] node [above, pos = 0.6] {$2$} (a)
  		(d) edge [bend left = 35] node [above, pos = 0.4] {$1$} (a)
  		(e) edge [bend left = 35] node [below, pos = 0.6] {$2$} (a);
  	    	;
\end{tikzpicture}
\caption{The Path-Complete graph $\graph_7$.}
\label{fig:PCExample_7}
\end{figure}
Given a set of two invertible\footnote{As mentionned in \cite[Remark 2.1]{AhJuJSRA}, we can assume to be dealing with invertible matrices without loss of generality in the current context.} matrices $\mset = \{A_1, A_2\}$ of dimension $n$, consider a PCLF $V \sim \pclf(\graph, \mset)$. For any pair of labels $\sigma, \sigma' \in \alphabet{2}$, we have
\begin{equation}
\forall x \in \reels^n: V_{a_7}(A_\sigma A_\sigma' x) \leq V_{a_7}(x).
\label{eq:LMIsofG7}
\end{equation}
We can show that the existence of a PCLF for this graph implies that of a PCLF for the graph $\graph_8$ in Figure \ref{fig:PCExample_8} (see e.g. \cite[Theorem 5.1]{AhJuJSRA}, \cite[Theorem 3.5]{PhEsSODT}).
\begin{figure}[!ht]
\centering
\begin{tikzpicture}[->,>=stealth',shorten >=1pt,auto,node distance=2cm,
                    semithick, scale = 1, transform shape ]
 \node[state] (V1)                           {$a_8$};
 \node[state] (V2)     [right of = V1]       {$b_8$};
  \path (V1)edge [loop,  out = 210, in = 150, min distance = 10mm]             node {$1$} (V1)
      	    edge [bend left] node {$2$}            (V2)
     	(V2)edge [bend left]             node {$1$} (V1)
      	    edge [loop,  out = 30, in = -30, min distance = 10mm] node {$2$} (V2);
\end{tikzpicture}
\caption{The Path-Complete graph $\graph_8$.}
\label{fig:PCExample_8}
\end{figure}
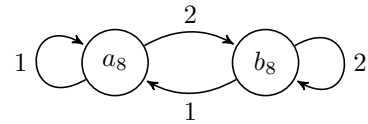
Indeed, given the PCLF $V$ above, we can construct a PCLF $U = (U_{a_8}, U_{b_8})^\top$ for $\graph_8$ by taking 
$$
\begin{aligned}
U_{a_8}(x) = V_{a_7}(x) + V_{a_7}(A_1^{-1} x), \\  U_{b_8}(x) = V_{a_7}(x) + V_{a_7}(A_2^{-1} x).
\end{aligned}
$$
This can be proven in a manner similar to that of Example \ref{example:ordering0}. 
For example, take the edge $(a_8, b_8, 2)$ in $\graph_8$, which corresponds to
$$ \forall x \in \reels^n: V_{a_7}(A_2 x) + V_{a_7}(x) \leq V_{a_7}(x) + V_{a_7}(A_1^{-1}x). $$
The inequality is satisfied by evaluating the inequality \eqref{eq:LMIsofG7} at the point $x' = A_1^{-1} x$, for $\sigma' = 1$ and $\sigma = 2$,
which gives 
$ \forall x\in \reels^n: V_{a_7}(A_2 x) \leq V_{a_7}(A_1^{-1}x). $
\label{example:extension}
\end{example}
The main difference here compared to the technique of Example \ref{example:ordering0} is that we need to generate valid inequalities involving a term $A_1^{-1} x$. Such inequalities are easily obtained through a change of variables, as done in Example \ref{example:extension}. This leads to a set of linear inequalities on a vector of the form
$$\begin{pmatrix}
V(A_2^{-1}x) \\
V(A_1^{-1}x) \\
V(x) \\
V(A_1x) \\
V(A_2x)
\end{pmatrix}. $$
By enumerating such inequalities and embedding them in a matrix representation, we can establish sufficient conditions under the form of linear programs similar to that of Theorem \ref{thm:comparisonLP} for when, given a set of matrices $\mset$ and two graphs $\graph$ and $\graph'$, the existence of a PCLF $V$ on $\graph$ for $\mset$ implies that of a PCLF $U$ on $\graph'$ where each function in $U$ is a combination of functions in $V$ and their compositions with the dynamics and their inverses up to a fixed number of compositions. Further work remains to devise conditions for when this is possible.

\section{Conclusions}

Path-Complete criteria are promising tools for the analysis of hybrid and cyber-physical systems. 
They encapsulate several powerful and popular techniques for the stability analysis of switching systems.  
Moreover, their range of application seems much wider. First, they can handle switching nonlinear systems as well and are not limited to LMIs and quadratic pieces (see \cite{AnLaASCF, AnAtALPT}). Second, they have been used to analyze systems where the switching signal is constrained \cite{PhEsSODT}. Third, they can be used beyond stability analysis \cite{PhMiDTBA,PhEsESAM,FoJuPCPO}.\\
However, many questions about these techniques still need to be clarified.  In this paper we tackle two fundamental questions. First, we gave a clear interpretation of these criteria in terms of common Lyapunov function: each criterion implies the existence of a common Lyapunov function which can be expressed as the minimum of maxima of sets of functions (Theorem \ref{thm:EmbeddedCLF}). The combinatorial structure of the graph is used to combine these sets into a CLF.
Second, we tackled the problem of deciding when one criterion is more conservative than another, and provide a first systematic approach to the problem (Theorem \ref{thm:comparisonLP}).
Note that while we focused here on Path-Complete graphs, it is clear that our approach applies to the comparison of any graphs, such as those used in the building of stability certificates for \emph{constrained switching systems} \cite{PhEsSODT}.\\
For further work, the application of the tools used in Section \ref{sec:PCLFtoCLF}, such as the observer graph, to refine the analysis of Section \ref{sec:Comparison} remains to be investigated.
On a more practical side, the common Lyapunov function representation presented in Section \ref{sec:PCLFtoCLF} could allow us to better leverage Path-Complete techniques for reachability  and invariance analysis \cite{AtSmISAF}.

\label{sec:DiscussionAndConclussion}

\bibliographystyle{IEEEtranS}
\bibliography{biblio}

\begin{thebibliography}{10}
\providecommand{\url}[1]{#1}
\csname url@samestyle\endcsname
\providecommand{\newblock}{\relax}
\providecommand{\bibinfo}[2]{#2}
\providecommand{\BIBentrySTDinterwordspacing}{\spaceskip=0pt\relax}
\providecommand{\BIBentryALTinterwordstretchfactor}{4}
\providecommand{\BIBentryALTinterwordspacing}{\spaceskip=\fontdimen2\font plus
\BIBentryALTinterwordstretchfactor\fontdimen3\font minus
  \fontdimen4\font\relax}
\providecommand{\BIBforeignlanguage}[2]{{%
\expandafter\ifx\csname l@#1\endcsname\relax
\typeout{** WARNING: IEEEtranS.bst: No hyphenation pattern has been}%
\typeout{** loaded for the language `#1'. Using the pattern for}%
\typeout{** the default language instead.}%
\else
\language=\csname l@#1\endcsname
\fi
#2}}
\providecommand{\BIBdecl}{\relax}
\BIBdecl

\bibitem{AhJuJSRA}
A.~A. Ahmadi, R.~M. Jungers, P.~A. Parrilo, and M.~Roozbehani, ``Joint spectral
  radius and path-complete graph lyapunov functions,'' \emph{SIAM Journal on
  Control and Optimization}, vol.~52, no.~1, pp. 687--717, 2014.

\bibitem{AnPhPCGC}
D.~Angeli, N.~Athanasopoulos, R.~M. Jungers, and M.~Philippe, ``Path-complete
  graphs and common lyapunov functions,'' \emph{Proceedings of the 20th
  International Conference on Hybrid Systems: Computation and Control, 2017},
  2017.

\bibitem{AnAtALPT}
------, ``A linear program to compare path-complete lyapunov functions,''
  \emph{To appear in Proceedings of the 56th IEEE Conference on Decision and
  Control (CDC2017)}, 2017.

\bibitem{AtSmISAF}
N.~Athanasopoulos, K.~Smpoukis, and R.~Jungers, ``Invariant sets analysis for
  constrained switching systems,'' \emph{IEEE Control Systems Letters}, vol.~1,
  no.~2, pp. 256--261, 2017.

\bibitem{AnLaASCF}
N.~Athanasopoulos and M.~Lazar, ``Alternative stability conditions for switched
  discrete time linear systems,'' in \emph{IFAC World Congress}, 2014, pp.
  6007--6012.

\bibitem{BlMiSTMI}
F.~Blanchini and S.~Miani, \emph{Set--Theoretic Methods in Control}, ser.
  Systems \& {C}ontrol: {F}oundations \& {A}pplications.\hskip 1em plus 0.5em
  minus 0.4em\relax Boston, {B}asel, {B}erlin: Birkhauser, 2008.

\bibitem{BlFeSAOD}
P.-A. Bliman and G.~Ferrari-Trecate, ``Stability analysis of discrete-time
  switched systems through lyapunov functions with nonminimal state,'' in
  \emph{Proceedings of IFAC Conference on the Analysis and Design of Hybrid
  Systems}, 2003, pp. 325--330.

\bibitem{BlTsTBOA}
V.~D. Blondel and J.~N. Tsitsiklis, ``The boundedness of all products of a pair
  of matrices is undecidable,'' \emph{Systems \& Control Letters}, vol.~41,
  no.~2, pp. 135--140, 2000.

\bibitem{BrMLFA}
M.~S. Branicky, ``Multiple lyapunov functions and other analysis tools for
  switched and hybrid systems,'' \emph{IEEE Transactions on Automatic Control},
  vol.~43, no.~4, pp. 475--482, 1998.

\bibitem{CaLaITDE}
C.~G. Cassandras and S.~Lafortune, \emph{Introduction to discrete event
  systems}.\hskip 1em plus 0.5em minus 0.4em\relax Springer Science \& Business
  Media, 2009.

\bibitem{DaBePDLF}
J.~Daafouz and J.~Bernussou, ``Parameter dependent lyapunov functions for
  discrete time systems with time varying parametric uncertainties,''
  \emph{Systems \& Control Letters}, vol.~43, no.~5, pp. 355--359, 2001.

\bibitem{DaRiSAAC}
J.~Daafouz, P.~Riedinger, and C.~Iung, ``Stability analysis and control
  synthesis for switched systems: a switched lyapunov function approach,''
  \emph{IEEE Transactions on Automatic Control}, vol.~47, no.~11, pp.
  1883--1887, 2002.

\bibitem{DoHeSAON}
M.~C.~F. Donkers, W.~P. M.~H. Heemels, N.~van~den Wouw, and L.~Hetel,
  ``Stability analysis of networked systems using a switched linear systems
  approach,'' \emph{IEEE Transactions on Automatic Control}, vol.~56, pp.
  2101--2115, 2011.

\bibitem{EsLeCOLS}
R.~Essick, J.-W. Lee, and G.~E. Dullerud, ``Control of linear switched systems
  with receding horizon modal information,'' \emph{IEEE Transactions on
  Automatic Control}, vol.~59, no.~9, pp. 2340--2352, 2014.

\bibitem{FoJuPCPO}
F.~Forni, R.~M. Jungers, and R.~Sepulchre, ``Path-complete positivity of
  switching systems,'' \emph{arXiv preprint arXiv:1611.02603}, 2016.

\bibitem{GeGiAACL}
R.~Geiselhart, R.~H. Gielen, M.~Lazar, and F.~R. Wirth, ``An alternative
  converse lyapunov theorem for discrete-time systems,'' \emph{Systems \&
  Control Letters}, vol.~70, pp. 49--59, 2014.

\bibitem{GiPaABAB}
A.~Girard and G.~J. Pappas, ``Approximate bisimulation: A bridge between
  computer science and control theory,'' \emph{European Journal of Control},
  vol.~17, pp. 568--578, 2011.

\bibitem{GoHuDMII}
R.~Goebel, T.~Hu, and A.~R. Teel, ``Dual matrix inequalities in stability and
  performance analysis of linear differential/difference inclusions,'' in
  \emph{Current trends in nonlinear systems and control}.\hskip 1em plus 0.5em
  minus 0.4em\relax Springer, 2006, pp. 103--122.

\bibitem{HeDTCL}
J.~C. Hennet, ``{Discrete time constrained Linear Systems},'' \emph{Control and
  Dynamic Systems}, vol.~71, pp. 157--214, 1995.

\bibitem{HeMiOAMS}
E.~A. Hernandez-Vargas, R.~H. Middleton, and P.~Colaneri, ``Optimal and mpc
  switching strategies for mitigating viral mutation and escape,'' in
  \emph{IFAC World Congress}, 2011, pp. 14\,857--14\,862.

\bibitem{HeDaEBTL}
L.~Hetel, J.~Daafouz, and C.~Iung, ``Equivalence between the
  lyapunov--krasovskii functionals approach for discrete delay systems and that
  of the stability conditions for switched systems,'' \emph{Nonlinear Analysis:
  Hybrid Systems}, vol.~2, no.~3, pp. 697--705, 2008.

\bibitem{JoRaCOPQ}
M.~Johansson and A.~Rantzer, ``Computation of piecewise quadratic lyapunov
  functions for hybrid systems,'' \emph{IEEE transactions on automatic
  control}, vol.~43, no.~4, pp. 555--559, 1998.

\bibitem{JuTJSR}
R.~Jungers, ``The joint spectral radius,'' \emph{Lecture Notes in Control and
  Information Sciences}, vol. 385, 2009.

\bibitem{JuAhACOL}
R.~M. Jungers, A.~A. Ahmadi, P.~A. Parrilo, and M.~Roozbehani, ``A
  characterization of lyapunov inequalities for stability of switched
  systems,'' \emph{IEEE Transactions on Automatic Control}, vol.~62, no.~6, pp.
  3062--3067, 2017.

\bibitem{LeDuUSOD}
J.-W. Lee and G.~E. Dullerud, ``{Uniform Stabilization of discrete-time
  switched and Markovian jump Linear systems},'' \emph{Automatica}, vol.~42,
  pp. 205--218, 2006.

\bibitem{LiSISA}
D.~Liberzon, \emph{Switching in systems and control}.\hskip 1em plus 0.5em
  minus 0.4em\relax Springer Science \& Business Media, 2012.

\bibitem{LiMoBPIS}
D.~Liberzon and S.~A. Morse, ``Basic problems in stability and design of
  switched systems,'' \emph{IEEE Control Systems Magazine}, vol.~19, no.~5, pp.
  59--70, 1999.

\bibitem{LiAnSASO}
H.~Lin and P.~J. Antsaklis, ``Stability and stabilizability of switched linear
  systems: a survey of recent results,'' \emph{IEEE Transactions on Automatic
  control}, vol.~54, no.~2, pp. 308--322, 2009.

\bibitem{MaAlCOHC}
S.~Mariethoz, S.~Almer, M.~Baja, A.~G. Beccuti, D.~Patino, A.~Wernrud,
  J.~Buisson, H.~Cormerais, T.~Geyer, H.~Fujioka, U.~T. Johnson, C.-Y. Kao,
  M.~Morari, G.~Papafotiou, A.~Rantzer, and P.~Riedinger, ``Comparison of
  hybrid control techniques for buck and boost dc-dc converters,'' \emph{IEEE
  Transactions on Control Systems Technology}, vol.~18, pp. 1126--1145, 2010.

\bibitem{MoPyCOAS}
A.~P. Molchanov and Y.~S. Pyatnitsky, ``Criteria of asymptotic stability of
  differential and difference inclusions encountered in control theory,''
  \emph{Systems and Control Letters}, vol.~13, pp. 59--64, 1989.

\bibitem{PaJaAOTJ}
P.~A. Parrilo and A.~Jadbabaie, ``Approximation of the joint spectral radius
  using sum of squares,'' \emph{Linear Algebra and its Applications}, vol. 428,
  no.~10, pp. 2385--2402, 2008.

\bibitem{PhEsESAM}
M.~Philippe, R.~Essick, G.~E. Dullerud, and R.~M. Jungers, ``Extremal storage
  functions and minimal realizations of discrete-time linear switching
  systems,'' in \emph{Proceedings of the 55th IEEE Conference on Decision and
  Control (CDC)}, 2016, pp. 5533--5538.

\bibitem{PhEsSODT}
------, ``Stability of discrete-time switching systems with constrained
  switching sequences,'' \emph{Automatica}, vol.~72, pp. 242--250, 2016.

\bibitem{PhMiDTBA}
M.~Philippe, G.~Mill{\'e}rioux, and R.~M. Jungers, ``Deciding the boundedness
  and dead-beat stability of constrained switching systems,'' \emph{Nonlinear
  Analysis: Hybrid Systems}, vol.~23, pp. 287--299, 2017.

\bibitem{RoStANOT}
G.-C. Rota and G.~W. Strang, ``A note on the joint spectral radius,''
  \emph{Nederlandse Koninkl{\"{y}} ke Akademie \& van Wetenschappen
  Indagationes Mathematicae, 22, 379-381}, 1960.

\bibitem{SaEOAT}
J.~Sakarovitch, \emph{Elements of automata theory}.\hskip 1em plus 0.5em minus
  0.4em\relax Cambridge University Press, 2009.

\bibitem{ShWiAPSM}
R.~Shorten, F.~Wirth, and D.~Leith, ``A positive systems model of tcp-like
  congestion control: asymptotic results,'' \emph{IEEE/ACM Transactions on
  Networking}, vol.~14, no.~3, pp. 616--629, 2006.

\bibitem{ShWiSCFS}
R.~Shorten, F.~Wirth, O.~Mason, K.~Wulff, and C.~King, ``Stability criteria for
  switched and hybrid systems,'' \emph{SIAM review}, vol.~49, no.~4, pp.
  545--592, 2007.

\end{thebibliography}

\end{document}